\newif\ifLONGVER
\newtheorem{theorem}{Theorem}[section]
\newtheorem{lemma}[theorem]{Lemma}
\newtheorem{proposition}[theorem]{Proposition}
\newtheorem{corollary}[theorem]{Corollary}
\theoremstyle{definition}
\newtheorem{definition}[theorem]{Definition}
\theoremstyle{remark}
\newtheorem{remark}[theorem]{Remark}
\newtheorem{notation}[theorem]{Notation}
\numberwithin{equation}{section}
\setlist[enumerate,itemize]{leftmargin=0.75cm,itemsep=1.5pt,topsep=1.5pt}
\newcommand{\lang}[2]{\mscr{L}_{{\scriptscriptstyle #1}}^{{\scriptscriptstyle #2}}}
\newcommand{\bb}{\mathbb}
\newcommand{\mcal}{\mathcal}
\newcommand{\mscr}{\mathscr}
\newcommand{\mfrak}{\mathfrak}
\newcommand{\ul}{\underline}
\newcommand{\pow}{\mcal{P}}
\newcommand{\wo}{\setminus}
\newcommand{\upperRomannumeral}[1]{\uppercase\expandafter{\romannumeral#1}}
\newcommand{\Rmn}[1]{\expandafter\@slowromancap\romannumeral #1@}
\newcommand{\oset}[3][0ex]{%
  \mathrel{\mathop{#3}\limits^{
    \vbox to#1{\kern-2\ex@
    \hbox{$\scriptstyle#2$}\vss}}}}
\newcommand{\exle}{\oset[0ex]{\scriptscriptstyle \exists}{<}}
\DeclareMathOperator{\suf}{s}
\DeclareMathOperator{\prf}{p}
\newcommand{\ps}{\suf^{-1}}
\newcommand{\pp}{\prf^{-1}}
\newcommand{\ips}{\mfrak{s}^{-1}}
\newcommand{\lcro}{\mathbb{I}}
\newcommand{\str}[1]{\mscr{#1}} %set font for structures
\newcommand{\langMO}{\lang{\MO}{}}
\newcommand{\langMFIN}{\lang{\MSO(\Fin)}{}}
\newcommand{\langWI}{\lang{\WSO(\lcro)}{}}
\newcommand{\langLI}{\lang{\LCI(\lcro)}{}}
\newcommand{\bdr}[1]{#1^{\partial}}
\newcommand{\cz}{\mathbf{0}}
\newcommand{\newsection}[1]{\section{#1}}
\newcommand{\newsubsection}[1]{\subsection{#1}}
\DeclarePairedDelimiter\dVert{\lVert}{\rVert}
\DeclarePairedDelimiter\funky{\lfloor}{\rceil}
\DeclareMathOperator{\Th}{Th}
\DeclareMathOperator{\At}{At}
\DeclareMathOperator{\Fin}{Fin}
\DeclareMathOperator{\fin}{fin}
\DeclareMathOperator{\MO}{MO}
\DeclareMathOperator{\WSO}{W}
\DeclareMathOperator{\MSO}{M}
\DeclareMathOperator{\LCI}{L}
\DeclareMathOperator{\bdd}{Bd}
\DeclareMathOperator{\fci}{fci}
\DeclareMathOperator{\ind}{ind}
\begin{document}
\title{Model-completeness for a dense linear order in weak monadic second order logic}
\author{Deacon Linkhorn}
\date{\today}

\maketitle

\vspace{-0.5cm}

\tableofcontents

\vspace{-0.5cm}

\clearpage
\newsection{Introduction}

In this note we present a streamlined and (hopefully) accessible proof of the main result from the authors' PhD thesis \cite{DLPhD}.
The result, \cref{Theorem-WIModelComplete}, establishes model-completeness of the weak monadic second order version of a dense linear order with left-endpoint but no right-endpoint in the signature $\langWI$ (\cref{Definition-LWI}).
In essence the proof given here and the proof given in \cite{DLPhD} are the same, both follow the same pattern of using a model-completeness result for the pseudofinite monadic second order theory of linear order together with Feferman-Vaught machinery of generalised powers.
However, the presentation here is greatly simplified, mostly through the removal of unnecessary (with respect to the model-completeness result) computational aspects which were considered in much of the supporting machinery in \cite{DLPhD}.
The role of the Feferman-Vaught machinery is much more transparent in the presentation given here.

The proof of the model-completeness result for the weak monadic second order version of a linear order occupies the four sections of the note following this introduction. 
In the final section it is shown that working in a signature $\langLI$ (\cref{Definition-LI}), model-completeness for the lattice of finite unions of closed intervals for this dense linear order can be obtained using the model-completeness result for the weak monadic second order version using interpretations.

There is quite substantial overlap with the note/preprint \cite{DeaconLIpreprint}. 
We have reused some of the preliminary material given in \cref{Section-definitions}, and \cref{Section-transfer} is simply a reproduction of the main results from \cite{DeaconLIpreprint}.

Logical terminology and notation, where not explicitly defined, tries to follow \cite{HodgesBible}.

\newsection{Setup}\label{Section-definitions}

In this section we will setup some definitions which we need as well as recalling some results which we make use of.

We will first introduce the notion of the (weak) monadic second order version of a linear order. 
Then we will look at two theories of (weak) monadic second order versions of linear orders, giving signatures for both which are suitable to achieve model-completeness results. 

\newsubsection{Basic concepts}

\begin{definition}
Let $S$ be a set.
We write $\pow(S)$ for the powerset of $S$.
We write $\pow_{\fin}(S)$ for the collection of finite subsets of $S$.
\end{definition}

\begin{definition}
Let $\alpha$ be any linear order.
A \textbf{closed interval} of $\alpha$ is a subset $S \subseteq \alpha$ of one of the following forms,
\begin{enumerate}
\item $[i,j] \coloneqq \{k \in \alpha: i \leq k \leq j\}$ for $i,j \in \alpha$,
\item $[i,+\infty) \coloneqq \{k \in \alpha: i \leq k\}$ for $i \in \alpha$,
\item $(-\infty,j] \coloneqq \{k \in \alpha: k \leq j\}$ for $j \in \alpha$.
\end{enumerate}
We will write $\pow_{\fci}(\alpha)$ for the set of finite unions of closed intervals of $\alpha$.
\end{definition}

Note that $\pow_{\fin}(\alpha) \subseteq \pow_{\fci}(\alpha) \subseteq \pow(\alpha)$.

\begin{definition}\label{Def-discreteassociatedfunc}
Now let $\alpha$ be a \emph{discrete} linear order. We associate the following four functions with $\alpha$,
\begin{enumerate}
    \item $\suf_{\alpha}: \alpha \wo \max(\alpha) \rightarrow \alpha$ for the successor function on $\alpha$,
    \item $\prf_{\alpha}: \alpha \wo \min(\alpha) \rightarrow \alpha$ for the predecessor function on $\alpha$,
    \item $\ps_{\alpha}: \pow(\alpha) \rightarrow \pow(\alpha)$,  $A  \mapsto \{i \in \alpha: \suf_{\alpha}(i) \in A\}$,
    \item $\pp_{\alpha}: \pow(\alpha) \rightarrow \pow(\alpha)$,  $A  \mapsto \{i \in \alpha: \prf_{\alpha}(i) \in A\}$.
\end{enumerate}

With $\alpha$ again a linear order (not necessarily discrete), we call $A \in \pow(\alpha)$ \textbf{discrete} if the restriction of $\alpha$ to $A$ is a discrete linear order.
For such discrete $A \subseteq \alpha$ we write $\suf_A$ (respectively $\prf_A$) for the successor (respectively predecessor) function on the restriction of $\alpha$ to a linear ordering on $A$.
Note that every finite subset of $\alpha$ is discrete. 
\end{definition}

\begin{definition}
We write $\langMO$ for the signature $\{\subseteq,\exle\}$ comprising two binary relation symbols. 
To each linear order $\alpha$ we associate two $\langMO$-structures.

The \textbf{monadic second order version} of $\alpha$, denoted by $\MSO(\alpha)$, has universe $\pow(\alpha)$. 
The \textbf{weak monadic second order version} of $\alpha$, denoted by $\WSO(\alpha)$, has universe $\pow_{\fin}(\alpha)$.
In both structures $\subseteq$ is interpreted as set-theoretic inclusion, while $\exle$ is interpreted as follows,
\[
A \exle B \text{ if and only if } i < j \text{ for some }i \in A,j \in B,
\]
where $<$ is the ordering of $\alpha$.
\end{definition}

\begin{remark}
For each linear order $\alpha$, the collection of singleton subsets of $\alpha$ is definable in both $\MSO(\alpha)$ and $\WSO(\alpha)$. 
The same $\langMO$-formula works in both structures, and is given by taking the atoms of the ordering $\subseteq$.
As such we will write $\At$ to denote both the collection of singletons and the formula which defines this collection.
\end{remark}

\begin{definition}\label{Definition-DefEquiv}
Let $\lang{1}{}$ and $\lang{2}{}$ be first order signatures. 
We will say that an $\lang{1}{}$-structure $\str{M}$ and an $\lang{2}{}$-structure $\str{N}$ are \textbf{definitionally equivalent} over a bijection $f:\str{M} \leftrightarrow \str{N}$ if (after identifying the underlying sets via the bijection $f$) the definable sets in $\str{M}$ and $\str{N}$ coincide.
A necessary and sufficient condition for $\str{M}$ and $\str{N}$ being definitionally equivalent is that for each \emph{atomic} $\lang{1}{}$-formula $\phi(\bar{x})$ there is a $\lang{2}{}$-formula $\psi(\bar{y})$ defining the same set as $\phi$ (after identification via $f$), and vice versa.

If $T_1$ is an $\lang{1}{}$-theory and $T_2$ is an $\lang{2}{}$-theory then we will say that $T_1$ and $T_2$ are definitionally equivalent if the following hold,
\begin{enumerate}
    \item for each $\str{M} \models T_1$ there is a unique $\str{N} \models T_2$ (w.l.o.g with the same universe, so that $f$ is trivial) with $\str{M}$ and $\str{N}$ definitionally equivalent, and vice versa,
    \item this occurs uniformly in the sense that for each atomic $\lang{1}{}$-formula $\phi(\bar{x})$ there is a $\lang{2}{}$-formula $\psi(\bar{x})$ such that for every $\str{M} \models T_1$ the formula $\psi$ defines the same set in the corresponding $\lang{2}{}$-structure $\str{N}$ as $\phi$ does in $\str{M}$, and vice versa.
\end{enumerate}

\end{definition}

\newsubsection{Restriction and relativisation}

We can often interpret (weak) monadic versions of linear orders inside one another. 
Here we will outline a common pattern that these interpretations take. 

Let $\alpha$ be a linear order. 
Let $\phi(X)$ be an $\langMO$-formula which defines a collection of singletons in either $\WSO(\alpha)$ or $\MSO(\alpha)$.
We can think of this collection of singletons as a subset of $\alpha$, and hence as a sub-order $\beta \subseteq \alpha$.
It is then not hard to see that $\pow_{\fin}(\beta)$, respectively $\pow(\beta)$, is definable in $\WSO(\alpha)$, respectively $\MSO(\alpha)$. 

$\WSO(\beta)$ is the substructure of $\WSO(\alpha)$ induced on $\pow_{\fin}(\beta)$, and likewise $\MSO(\beta)$ is the substructure of $\MSO(\alpha)$ induced on $\pow(\beta)$. 
Therefore $\WSO(\beta)$ is interpretable in $\WSO(\alpha)$, and $\MSO(\beta)$ is interpretable in $\MSO(\alpha)$.
These are particularly simple interpretations, restriction to a definable set in a relational signature.

Let us look at two pertinent special cases.

\begin{definition}\label{Definition-ResElement}
Let $\alpha$ be a linear order and suppose $A \in \MSO(\alpha)$.
Then taking the formula $\phi(X)$ to be $\At(X) \wedge X \subseteq A$ we can define (over the parameter $A$) the collection of singleton subsets of $\alpha$ contained in $A$. 
Viewing $A \subseteq \alpha$ as a suborder, we can therefore interpret $\MSO(A)$ in $\MSO(\alpha)$.
We will write $\MSO(\alpha) \upharpoonright A$ for the copy of $\MSO(A)$ living inside $\MSO(\alpha)$ given by this interpretation.
By standard arguments using the relativisation of quantifiers, for each $\langMO$-formula $\psi(\bar{Y})$ there is an $\langMO$-formula $\psi (\bar{Y}) \downharpoonright A$ such that,
\[
\MSO(\alpha) \upharpoonright A \models \psi(\bar{B}) \Longleftrightarrow \MSO(\alpha) \models \psi(\bar{B}) \downharpoonright A,
\]
for each $A \in \MSO(\alpha)$ and $\bar{B} \in \MSO(A)$.
\end{definition}

\begin{remark}
The above construction can be carried out under much weaker conditions, in particular for a much wider class of $\langMO$-structures.
For example, if we take $\str{M}$ an $\langMO$-structure such that $\str{M} \equiv \MSO(\alpha)$ for some $\alpha$, then $\str{M}$ must be an atomic distributive lattice. 
For any definable set of atoms $\rho(X)$ in $\str{M}$ it makes sense to consider the restriction of $\str{M}$ to that definable set, by considering the substructure induced on the set defined by,
\[
\mu(Y): \forall X ((\At(X) \wedge X \subseteq Y) \rightarrow \rho(X)).
\]
\end{remark}

\begin{definition}\label{Definition-ResInterval}
Let $\alpha$ be a linear order and suppose $i,j \in \alpha$.
Then taking the formula $\phi(X)$ to be $\At(X) \wedge (\{i\} \exle X \vee \{i\} = X) \wedge X \exle \{j\}$ we can define (over the parameters $\{i\}$ and $\{j\}$) the collection of singleton subsets of $\alpha$ contained in the interval $[i,j)$ of $\alpha$. 
Viewing $[i,j)$ as a suborder of $\alpha$, we can therefore interpret $\WSO([i,j))$ in $\WSO(\alpha)$.
Note that if we worked in $\MSO(\alpha)$ this would be a special case of the restricting to an element $A \in \MSO(\alpha)$, but in $\WSO(\alpha)$ this is not the case as in general an interval will not be a finite set.
We will write $\WSO(\alpha) \upharpoonright [i,j)$ for the copy of $\WSO([i,j))$ living inside $\WSO(\alpha)$ given by this interpretation.
Again by standard arguments using the relativisation of quantifiers, for each $\langMO$-formula $\psi(\bar{Y})$ there is an $\langMO$-formula $\psi (\bar{Y}) \downharpoonright A$ such that,
\[
\WSO(\alpha) \upharpoonright [i,j) \models \psi(\bar{B}) \Longleftrightarrow \WSO(\alpha) \models \psi(\bar{B}) \downharpoonright [i,j),
\]
for each $i,j \in \alpha$ and $\bar{B} \in \WSO([i,j))$.

As in \cref{Definition-ResElement}, this construction can be generalised to a much broader class of structures that the (weak) monadic second order versions of linear orders.
\end{definition}

We will make use of both of these particular cases, including their generalisation to non-standard structures (those not of the form $\WSO(\alpha)$ or $\MSO(\alpha)$ for some $\alpha$), in \cref{Section_WIModelComplete,Section_FefVau}.

\newsubsection{The pseudofinite monadic second order theory of linear order}

\begin{definition}\label{Definition-CanonicalFiniteLO}
For each $n \in \bb{N}$, we define $\ul{n}$ to be the linear order with underlying set $\{0,\ldots,n-1\}$ and with the usual ordering $0 < 1 < \ldots < n-1$. Note that we include $0 \in \bb{N}$, with $\ul{0}$ being the unique (trivial) linear ordering on the empty set.

The \textbf{pseudofinite monadic second order theory of linear order} is the shared $\langMO$-theory of the monadic second order versions of finite linear orders,
\[
\bigcap_{n \in \bb{N}}\Th(\MSO(\ul{n})).
\]

We write $T_{\MSO(\Fin)}$ for this theory.\footnote{This is the notation introduced in \cite{DLPhD}, where an explicit axiomatisation is given, as well as an analysis of the non-standard completions and proof of the model-completeness result which we give here without proof.}
\end{definition}

\begin{definition}
We write $\langMFIN$ for the signature $\{\cup,\cap,\bot,\cz,\cz^*,\ps\}$.
For each $n \in \bb{N}$, it is straightforward to see that the $\langMFIN$-structure with,
\begin{enumerate}
    \item universe $\pow(\ul{n})$,
    \item $\bot$ interpreted as the empty set,
    \item $\cz$ interpreted as $\{0\}$,
    \item $\cz^*$ interpreted as $\{n-1\}$,
    \item $\ps$ interpreted according to \cref{Def-discreteassociatedfunc},
\end{enumerate}
is definitionally equivalent (\cref{Definition-DefEquiv}) to $\MSO(\ul{n})$.
We will abuse notation and write $\MSO(\ul{n})$ both the $\langMO$-structure and the $\langMFIN$-structure on $\pow(\ul{n})$. 
As this definitional equivalence is uniform, meaning the same formulas can be used to translate between $\langMO$ and $\langMFIN$ for each $n \in \bb{N}$, we get an $\langMFIN$-theory which is definitionally equivalent to $T_{\MSO(\Fin)}$. 
We will again abuse notation and write $T_{\MSO(\Fin)}$ for this $\langMFIN$-theory, which is the shared theory and of the $\MSO(\ul{n})$ viewed as $\langMFIN$-structures.
\end{definition}

The following result serves as a foundation for the proof of the main theorem of this note \cref{Theorem-WIModelComplete}.
The proof is a straightforward application of automata normal form, directly inspired by the paper \cite{GvG}, but is omitted here.
For the proof refer to \cite{DLPhD}.

\begin{theorem}\label{Theorem-TMFinModelComplete}
The $\langMFIN$-theory $T_{\MSO(\Fin)}$ is model-complete.
\end{theorem}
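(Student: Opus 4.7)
The plan is to verify Robinson's criterion: every $\langMFIN$-formula $\phi(\bar{Y})$ must be shown equivalent, modulo $T_{\MSO(\Fin)}$, to a strictly existential formula $\exists \bar{X}\, \theta(\bar{X}, \bar{Y})$ with $\theta$ quantifier-free. Since $T_{\MSO(\Fin)} = \bigcap_{n \in \bb{N}} \Th(\MSO(\ul{n}))$ by definition, it suffices to exhibit, uniformly in $\phi$, a single existential $\psi$ for which $\MSO(\ul{n}) \models \forall \bar{Y}(\phi \leftrightarrow \psi)$ holds in every standard model.

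The key input is an automata normal form for MSO on finite words (Büchi--Elgot--Trakhtenbrot), in the uniform formulation used by \cite{GvG}: to each formula $\phi$ one assigns a finite automaton $\mcal{A}_{\phi}$ over the alphabet $\{0,1\}^{|\bar{Y}|}$, depending only on $\phi$, such that $\MSO(\ul{n}) \models \phi(\bar{B})$ iff $\mcal{A}_{\phi}$ accepts the length-$n$ word encoding $\bar{B}$. Establishing this normal form is the main technical ingredient and the content genuinely borrowed from \cite{GvG}.

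Given $\mcal{A}_{\phi} = (Q, \Sigma, \delta, q_0, F)$, acceptance of the encoded word is then rewritten existentially in $\langMFIN$ by introducing set-variables $(X_q)_{q \in Q}$ to trace the run, and conjoining as the quantifier-free body $\theta$: pairwise disjointness of the $X_q$'s; the initial-state assignment at $\cz$, case-split by the letter that $\bar{B}$ places at position $0$; a transition rule of the shape $X_q \cap \ps(A_a) \subseteq \ps(X_{q'})$ whenever $\delta(q,a) = q'$, where $A_a$ is the Boolean term in the $Y_j$'s selecting positions carrying letter $a$; and acceptance, namely $\cz^*$ lying in $X_q$ for some $q \in F$. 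The constants $\cz,\cz^*$ in the signature carry the boundary data, $\ps$ carries the local transition structure, and $\cap,\cup,\bot$ assemble the rest.

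The main obstacle is twofold. The genuinely non-trivial piece is the automata normal form itself, where the methods of \cite{GvG} are adapted to the finite-word setting and which the excerpt defers to \cite{DLPhD}. The secondary but still delicate piece is to verify that every run condition really does come out quantifier-free inside the impoverished signature $\langMFIN$; in particular the letter indicators $A_a$ and any cover condition on the $X_q$'s must be formulated without recourse to unrestricted complements, which is where the specific shape of $\langMFIN$ (including the two singleton constants and the shift $\ps$, but not a complementation operation) has to be used carefully. Once both are in hand, letting $\psi(\bar{Y})$ be $\exists \bar{X}\, \theta(\bar{X}, \bar{Y})$, the equivalence $\phi \leftrightarrow \psi$ holds in every $\MSO(\ul{n})$ by construction, so $\forall \bar{Y}(\phi \leftrightarrow \psi)$ belongs to $T_{\MSO(\Fin)}$, and Robinson's criterion delivers model-completeness.
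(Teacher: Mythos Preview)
Your proposal is correct and follows precisely the approach the paper indicates: the paper omits the proof entirely, stating only that it is ``a straightforward application of automata normal form, directly inspired by the paper \cite{GvG}'' and referring to \cite{DLPhD} for details. Your sketch---reduce to Robinson's criterion, invoke the B\"uchi--Elgot--Trakhtenbrot correspondence to obtain a DFA for $\phi$, then existentially quantify over a run $(X_q)_{q\in Q}$ and express the initial, transition, and acceptance constraints quantifier-free using $\cz$, $\cz^*$, $\ps$, and the lattice operations---is exactly that application, and your flagging of the absence of complementation in $\langMFIN$ as the point requiring care is apt (it is handled by existentially introducing a top element and the needed relative complements, all pinned down by quantifier-free equations in the given signature).
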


\newsubsection{The weak monadic second order theory of a dense linear order}

Fix a dense linear order $\lcro$, with a left endpoint but no right endpoint.
By the fact that any completion of the theory of dense linear orders is $\aleph_0$-categorical, and using Proposition 2.7 from \cite{TresslTop}, it follows that our results do not depend the particular choice of $\lcro$.
We will in \cref{Subsection-ParametersGP} take $\lcro$ to be countable, this will allow us to streamline certain proofs, however it is in no way essential.

\begin{definition}\label{Definition-LWI}
We write $0$ for the left endpoint of $\lcro$, i.e. the smallest element with respect to the ordering.
We introduce a signature,
\[
\langWI = \{\cup,\cap,\bot,\cz,\min,\max,\ips\}.\footnote{This comprises in order, two binary function symbols, two constants, two unary function symbols, and a binary function symbol.}
\]
The $\langWI$-structure with,
\begin{enumerate}
\item universe $\pow_{\fin}(\lcro)$, the collection of finite subsets of $\lcro$,
\item $\cup,\cap$ interpreted as the operations of union and intersection, 
\item $\bot$ interpreted as the empty set,
\item $\cz$ interpreted as $\{0\}$,
\item $\min$ and $\max$ interpreted as the operations taking a non-empty finite set to the singleton containing its minimum and maximum respectively, with respect to the ordering of $\lcro$ (and both fixing $\bot$),
\item $\ips$ being the binary function given by,
\[
\ips(A,B) = \{i \in A:\suf_A(i) \in B\},
\]
i.e. $\ips$ is a single binary function which encodes the (preimage map associated to) the family of successor functions on finite subsets of $\lcro$,
\end{enumerate}
is definitionally equivalent to the $\langMO$-structure $\WSO(\lcro)$.
We will abuse notation and write $\WSO(\lcro)$ for both the $\langWI$-structure just described, and the $\langMO$-structure $\WSO(\lcro)$. 
\end{definition}

We will prove that as an $\langWI$-structure $\WSO(\lcro)$ is model-complete in \cref{Section_WIModelComplete}.

\newsection{Interpretations}

The main purpose of this section is to make explicit a connection between $\WSO(\lcro)$ and $T_{\MSO(\Fin)}$ (\cref{Proposition-WIinterpretsTMFin}), and then show that this connection behaves particularly nicely with respect to $\langWI$ and $\langMFIN$-embeddings (\cref{Proposition-EmbeddingRestriction}).
We also show that $\WSO(\lcro)$ interprets itself via restriction down to any left-closed right-open interval.

\newsubsection{\texorpdfstring{Interpreting $T_{\MSO(\Fin)}$ in $\WSO(\lcro)$}{}}

Let $A \in \WSO(\lcro)$ be a finite subset of $\lcro$.
If  $A$ has cardinality $n \in \bb{N}$, then viewed as a  sub-order of $\lcro$ we get that $A$ is isomorphic to $\ul{n}$. 
Hence $\WSO(\lcro) \upharpoonright A \models T_{\MSO(\Fin)}$ for each $A \in \WSO(\lcro)$, as in fact $\WSO(\lcro) \upharpoonright A \cong \MSO(\ul{n})$ for some $n \in \bb{N}$.

For each $n \in \bb{N}$ it is clear that we can find some $A \in \WSO(\lcro)$ of size $n$.
As such the pseudofinite monadic second order theory of linear order, $T_{\MSO(\Fin)}$, is the shared theory of restrictions to elements in $\WSO(\lcro)$.
For each $\langMO$-sentence $\phi$,
\[
T_{\MSO(\Fin)} \models \phi \Longleftrightarrow \WSO(\lcro) \models \forall X (\phi \downharpoonright X).
\]
From this precise statement it becomes clear that this phenomenon only depends on the theory of $\WSO(\lcro)$ and hence if $\str{M} \equiv \WSO(\lcro)$ then $\str{M} \upharpoonright A \models T_{\MSO(\Fin)}$ for each $A \in \str{M}$ (where $\str{M} \upharpoonright A$ is defined in the obvious way).
However, $\str{M} \upharpoonright A$ need not be isomorphic to $\MSO(\ul{n})$, rather $\str{M} \upharpoonright A$ may be a non-standard (i.e. non-finite) model of $T_{\MSO(\Fin)}$.
More detail on what these models look like can be found in \cite{DLPhD}.

The following proposition is just a restatement of what we have just said, for the purposes of referencing later on.

\begin{proposition}\label{Proposition-WIinterpretsTMFin}
Let $\str{M}$ be a model of $\Th(\WSO(\lcro))$. 
Then for each $A \in \str{M}$ the restriction of $\str{M}$ to $A$, i.e. $\str{M} \upharpoonright A$, is a model of $T_{\MSO(\Fin)}$.
\end{proposition}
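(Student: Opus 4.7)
The plan is to show that the statement ``for every $A$, the restriction $\upharpoonright A$ satisfies $T_{\MSO(\Fin)}$'' is a first-order consequence of $\Th(\WSO(\lcro))$, and then transfer it to an arbitrary elementarily equivalent model $\str{M}$. The entire argument is really a formalisation of the informal remarks made immediately before the statement.

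First, I would fix an arbitrary $\langMO$-sentence $\phi \in T_{\MSO(\Fin)}$. By the relativisation construction of \cref{Definition-ResElement} (and its extension to arbitrary $\langMO$-structures discussed in the subsequent remark), there is an $\langMO$-formula $\phi \downharpoonright X$ in the single free variable $X$ such that, for any model $\str{N}$ of $\Th(\WSO(\lcro))$ and any $A \in \str{N}$,
\[
\str{N} \upharpoonright A \models \phi \Longleftrightarrow \str{N} \models (\phi \downharpoonright A).
\]
Next I would verify that the universal closure $\forall X (\phi \downharpoonright X)$ actually holds in $\WSO(\lcro)$ itself. For any $A \in \WSO(\lcro)$, by definition $A$ is a finite subset of $\lcro$, say of cardinality $n$. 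Viewed as a suborder of $\lcro$, $A$ is isomorphic to $\ul{n}$, and the restriction construction yields $\WSO(\lcro) \upharpoonright A \cong \MSO(\ul{n})$. Since $\MSO(\ul{n}) \models T_{\MSO(\Fin)}$ by definition of $T_{\MSO(\Fin)}$, we obtain $\WSO(\lcro) \upharpoonright A \models \phi$, and hence $\WSO(\lcro) \models \phi \downharpoonright A$. As $A$ was arbitrary,
\[
\WSO(\lcro) \models \forall X (\phi \downharpoonright X).
\]

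This universal closure is a single first-order $\langMO$-sentence, hence belongs to $\Th(\WSO(\lcro))$. Consequently it holds in the given model $\str{M}$ as well. Applying the relativisation equivalence in $\str{M}$, we conclude that $\str{M} \upharpoonright A \models \phi$ for every $A \in \str{M}$. Since $\phi \in T_{\MSO(\Fin)}$ was arbitrary, $\str{M} \upharpoonright A \models T_{\MSO(\Fin)}$ for every $A \in \str{M}$.

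There is no substantial obstacle here: the only mild care needed is in noting that relativisation is sensible for every $\langMO$-structure elementarily equivalent to $\WSO(\lcro)$ (since such a structure is still an atomic distributive lattice, as recalled in the remark following \cref{Definition-ResElement}), so that $\str{M} \upharpoonright A$ is well-defined for non-standard $A$. Once this is granted, the proof is just transfer of a first-order property across elementary equivalence. If the proposition is instead being read in the $\langWI$-signature, the same argument applies via the definitional equivalence of the two presentations recorded in \cref{Definition-LWI}.
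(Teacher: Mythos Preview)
Your proof is correct and is precisely the argument the paper gives in the paragraphs immediately preceding the proposition (the paper states the proposition as a ``restatement of what we have just said'' with no separate proof). You have simply made explicit the transfer via the universal sentence $\forall X(\phi\downharpoonright X)$, which is exactly what the paper sketches.
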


\newsubsection{\texorpdfstring{Interpreting $\WSO(\lcro)$ within itself}{}}\label{Subsection-TWISelfInterpret}

Working in $\lcro$, it is clear that if we take an interval of the form $[i,j)$ with $i \in \lcro$ and $j \in \lcro \cup \{+\infty\}$ (of course with $i < j$), we get a sub-order which is again a dense linear order with left endpoint but no right-endpoint.
For such $i$ and $j$ we therefore have $\WSO(\lcro) \equiv \WSO(\lcro) \upharpoonright [i,j)$ (see \cref{Definition-ResInterval} for the definition of the latter).

For each $\langMO$-sentence $\phi$ we therefore have,
\begin{align*}
\WSO(\lcro) \models \phi &\Longleftrightarrow \WSO(\lcro) \models \exists i,j (\chi(i,j) \wedge \phi \downharpoonright [i,j)),\\
&\Longleftrightarrow \WSO(\lcro) \models \forall i,j (\chi(i,j) \rightarrow \phi \downharpoonright [i,j)),
\end{align*}
where $\chi(i,j)$ is a formula capturing the basic conditions required.

\begin{proposition}
Let $\str{M}$ be a model of $\Th(\WSO(\lcro))$.
For each pair of atoms $i,j \in \At(\str{M})$ such that $i < j$,
\[
\str{M} \equiv \str{M} \upharpoonright [i,j).
\]
\end{proposition}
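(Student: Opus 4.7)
The plan is to promote the elementary equivalence $\WSO(\lcro) \equiv \WSO(\lcro) \upharpoonright [i,j)$, already observed in the standard model for $i \in \lcro$ and $j \in \lcro$ with $i < j$, into a single first-order schema that transfers along $\equiv$. The key leverage is that the relativisation map $\phi \mapsto \phi \downharpoonright [X,Y)$ from \cref{Definition-ResInterval} is uniform in the parameters $X,Y$ and, as noted in the remark concluding that definition, the same syntactic construction makes sense inside any $\langMO$-structure elementarily equivalent to $\WSO(\lcro)$.

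First I would fix an arbitrary $\langMO$-sentence $\phi$. The two-way displayed equivalence just above the proposition gives, in the standard model, that $\WSO(\lcro) \models \phi$ iff $\WSO(\lcro) \models \phi \downharpoonright [i,j)$ whenever $i,j \in \lcro$ are singletons with $i < j$. Packaging this uniformly, the $\langMO$-sentence
\[
\forall X \forall Y \bigl((\At(X) \wedge \At(Y) \wedge X \exle Y) \rightarrow (\phi \leftrightarrow \phi \downharpoonright [X,Y))\bigr)
\]
belongs to $\Th(\WSO(\lcro))$. Since $\str{M} \models \Th(\WSO(\lcro))$, the same sentence holds in $\str{M}$. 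Instantiating it with the given atoms $i,j \in \At(\str{M})$ satisfying $i \exle j$, and then invoking the defining property of the relativisation to pass from $\str{M} \models \phi \downharpoonright [i,j)$ to $\str{M} \upharpoonright [i,j) \models \phi$, we conclude that $\str{M} \models \phi$ iff $\str{M} \upharpoonright [i,j) \models \phi$. As $\phi$ was arbitrary, this yields $\str{M} \equiv \str{M} \upharpoonright [i,j)$.

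The only subtle point, and the step at which I would go slowly, is justifying that $\str{M} \upharpoonright [i,j)$ actually deserves that name and that the relativisation biconditional remains valid in $\str{M}$. For the standard structure this is \cref{Definition-ResInterval}; for $\str{M}$ it is the generalisation of that construction to arbitrary $\langMO$-structures elementarily equivalent to $\WSO(\lcro)$ flagged at the end of the definition. Because the substructure $\str{M} \upharpoonright [i,j)$ is defined by a formula $\mu(Y)$ with parameters $i,j$ (the same formula used in the standard case), and the relativisation $\psi \mapsto \psi \downharpoonright [i,j)$ is the standard quantifier-relativisation to that definable set, the biconditional $\str{M} \models \psi \downharpoonright [i,j) \Leftrightarrow \str{M} \upharpoonright [i,j) \models \psi$ holds for all $\psi$ purely by the form of the interpretation, with no further appeal to the theory. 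Once this is in place, the elementary equivalence is immediate from the sentence transfer above.
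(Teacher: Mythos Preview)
Your proposal is correct and follows essentially the same approach as the paper. The paper does not give a separate proof of this proposition; instead, the discussion immediately preceding it records the equivalences $\WSO(\lcro) \models \phi \Leftrightarrow \WSO(\lcro) \models \forall i,j\,(\chi(i,j) \rightarrow \phi \downharpoonright [i,j))$ in the standard model, and the proposition is then stated as the evident transfer of this first-order schema to any $\str{M} \equiv \WSO(\lcro)$---exactly the argument you spell out, with your single biconditional sentence playing the role of the paper's universal form.
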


\newsubsection{\texorpdfstring{Embeddings and the interpretation of $T_{\MSO(\Fin)}$ in $\WSO(\lcro)$}{}}

In both of the previous subsections we worked with $\langMO$-structures. 
Both interpretations can easily be translated to make use of the signatures $\langMFIN$ and $\langWI$ introduced earlier.
This is because of the fact that the structures we are interested in for both of these signatures are definitionally equivalent to $\langMO$-structures.

For each $\langWI$ structure $\str{M}$ which is elementarily equivalent to $\WSO(\lcro)$, and each $A \in \str{M}$, we can make sense of $\str{M} \upharpoonright A$ as an $\langMFIN$-structure.
Moreover we have that $\str{M} \upharpoonright A \models T_{\MSO(\Fin)}$, viewing $T_{\MSO(\Fin)}$ as an $\langMFIN$-theory.

The following result which plays a crucial role in the proof of the model-completeness of $\WSO(\lcro)$, and is the basis of the choice of signature $\langWI$.\footnote{By which I mean, $\langWI$ was engineered precisely to yield the result.}

\begin{proposition}\label{Proposition-EmbeddingRestriction}
Let $\str{M} \subseteq \str{N}$ be an inclusion, as $\langWI$-structures, of models of $\Th(\WSO(\lcro))$ and let $A$ be an element of $\str{M}$.
Then the inclusion restricts to an inclusion $\str{M} \upharpoonright A \subseteq \str{N} \upharpoonright A$ of $\langMFIN$-structures. 
\end{proposition}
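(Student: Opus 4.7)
The plan is to exhibit, for each basic symbol $\sigma$ of $\langMFIN$, an $\langWI$-term $t_\sigma(X, \bar{Y})$ such that whenever $\str{M}$ is a model of $\Th(\WSO(\lcro))$, $A \in \str{M}$, and $\bar{B}$ is a tuple of elements of $\str{M}$ each below $A$, we have
\[
\sigma^{\str{M}\upharpoonright A}(\bar{B}) = t_\sigma(A, \bar{B})^{\str{M}}.
\]
With such terms in hand the proposition follows immediately. The universe of $\str{M}\upharpoonright A$, namely $\{B \in \str{M} : B \cap A = B\}$, is carried into the universe of $\str{N}\upharpoonright A$ because $\cap$ is a symbol of $\langWI$ and so the defining condition is preserved by $\str{M} \subseteq \str{N}$; and the $\langMFIN$-operations agree on the common elements because $\langWI$-embeddings preserve all $\langWI$-terms.

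The required terms essentially read off the design of $\langWI$. For $\cup$, $\cap$ and $\bot$, which are common to both signatures, take the symbols themselves. For $\cz$, interpreted on the restriction as the smallest atom, take $t_\cz(X) = \min(X)$; for $\cz^*$, the largest atom, take $t_{\cz^*}(X) = \max(X)$. For $\ps$, the preimage of the successor function on the restriction, take $t_\ps(X, Y) = \ips(X, Y)$; this is literally the point of introducing the binary operation $\ips$ in \cref{Definition-LWI}, which packages the otherwise parameter-dependent family of successor-preimage maps into a single $\langWI$-function.

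Each identification $\sigma^{\str{M}\upharpoonright A}(\bar{B}) = t_\sigma(A,\bar{B})^{\str{M}}$ can be written as a universal $\langWI$-sentence: the $\langMFIN$-operations on $\str{M}\upharpoonright A$ are obtained by relativising their $\langMO$-definitions in $T_{\MSO(\Fin)}$ to the element $A$ and then translating back via the definitional equivalence into $\langWI$. Since $\str{M} \equiv \WSO(\lcro)$, it therefore suffices to verify each identity in the standard structure. There each is immediate from the definitions: for a finite $A \subseteq \lcro$, the least atom below $A$ is the singleton $\{\min A\} = \min^{\WSO(\lcro)}(A)$, the greatest is $\max^{\WSO(\lcro)}(A)$, and the successor function on the suborder $A$ coincides with the map $\suf_A$ used in the defining clause for $\ips$, whence $\ps^{\WSO(\lcro)\upharpoonright A}(B) = \{i \in A : \suf_A(i) \in B\} = \ips^{\WSO(\lcro)}(A,B)$ for $B \subseteq A$.

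No real obstacle arises; the proposition is precisely the confirmation that $\langWI$ has been chosen so that the $\langMFIN$-restriction operation commutes with $\langWI$-embedding. The one thing to take care with is the bookkeeping for the four non-trivial operations above, which is routine once the signature is laid out as in \cref{Definition-LWI}.
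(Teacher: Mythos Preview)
Your proof is correct and follows essentially the same approach as the paper: both verify that the universe of the restriction is preserved via the quantifier-free condition $B \cap A = B$, and then check that each $\langMFIN$-operation on the restriction is computed by an $\langWI$-term (equivalently, an unnested atomic $\langWI$-formula) in the parameter $A$, whence preservation under the $\langWI$-embedding is immediate. Your presentation is slightly more systematic in listing the term for every symbol and in invoking elementary equivalence with $\WSO(\lcro)$ to transfer the term identities to arbitrary models, whereas the paper treats one representative case ($\ps$) and leaves the rest to the reader, but the content is the same.
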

\begin{proof}
Recall that the signatures we work in are $\langMFIN = \{\cup,\cap,\bot,\cz,\cz^*,\ps\}$ and $\langWI = \{\cup,\cap,\bot,\cz,\min,\max,\ips\}$. 
Firstly we can observe that the inclusion $\str{M} \subseteq \str{N}$ (of $\langWI$-structures) does restrict to an inclusion as sets $\str{M} \upharpoonright A \subseteq \str{N} \upharpoonright A$. 
This is because for each $B \in \str{M} \upharpoonright A$ we have $\str{M} \models B \cap A = B$, and hence $\str{N} \models B \cap A = B$, giving $B \in \str{N} \upharpoonright A$.

It is sufficient to check that for each unnested atomic $\langMFIN$-formula $\phi(\bar{X})$ and each $\bar{B} \in \str{M} \upharpoonright A$,
\[
\str{M} \upharpoonright A \models \phi(\bar{B}) \Leftrightarrow \str{N} \upharpoonright A \models \phi(\bar{B}).
\]
This can be done by cases, going through the finitely many symbols from the signature $\langMFIN$. 
We will demonstrate with one example, which is the most involved. 
Consider the $\langMFIN$-formula $\ps(X_1)=X_2$. 
For each $B_1,B_2 \in \str{M} \upharpoonright A$,
\begin{align*}
\str{M} \upharpoonright A \models \ps(B_1) = B_2 &\Longleftrightarrow \str{M} \models \ips(B_1,A) = B_2,\\
&\Longleftrightarrow \str{N} \models \ips(B_1,A) = B_2,\\
&\Longleftrightarrow \str{N} \upharpoonright A \models \ps(B_1) = B_2.
\end{align*}
The middle bi-implication is a direct consequence of the assumption that $\str{M} \subseteq \str{N}$ is an inclusion of $\langWI$-structures.
All remaining cases can be dealt with in a similar fashion. 
\end{proof}

\newsection{Feferman-Vaught}\label{Section_FefVau}

In \cref{Proposition-WIinterpretsTMFin} we saw that through restriction to an element, $\Th(\WSO(\lcro))$ interprets $T_{\MSO(\Fin)}$.
The way that this is actually used is in taking $\langMO$-formulae $\phi(\bar{X})$ and giving $\langMO$-formulae $\psi(\bar{X},Y)$ such that $\psi$ \emph{says} ``$\phi$ is true in the restriction to $Y$''.
In this section we will establish a result which can be construed as a kind of converse. 
Taking an $\langMO$-formula $\phi(\bar{X})$ we will produce an $\langMO$-formula $\psi(\bar{X})$.
For each $\langMO$-structure $\str{M}$ which is a model of $\Th(\WSO(\lcro))$ and each tuple $\bar{A}$ in $\str{M}$, the idea will be that in the restriction $\str{M} \upharpoonright \funky{\bar{A}}$, $\psi(\bar{A})$ \emph{says} that $\phi(\bar{A})$ holds in $\str{M}$.

The way we will establish this result is to use the Feferman-Vaught machinery of generalised products.
In our case we will only require the special case of generalised powers. 
In particular, we will show that for any tuple $\bar{A}$ in $\WSO(\lcro)$, the pointed structure $(\WSO(\lcro),\bar{A})$ is equivalent to a generalised power of the structure $\WSO(\lcro)$ over the pointed structure $(\WSO(\lcro) \upharpoonright \funky{\bar{A}},\bar{A})$.

\newsubsection{Definitions and setup}

We will first briefly recall the essential notions from \cite{FefVau}, restricting attention to the special cases which we make use of.

\begin{definition}\label{Definition-AlgebraSets} \
\begin{enumerate}
    \item An \textbf{algebra of sets} is a structure of the form $\str{I} = (\pow(I),\subseteq,\ldots)$ for some set $I$, in a \emph{relational} signature $\lang{\ind}{}$ extending $\{\subseteq\}$, in which $\subseteq$ is inclusion of sets.
    When we want to highlight the choice of $\lang{\ind}{}$ we will talk of an $\lang{\ind}{}$-algebra of sets.
    
    \item For an $\lang{}{}$-structure $\str{M}$ we will write $\str{M}^I$ for the collection of functions from $I$ to $\str{M}$.
    For an $\lang{}{}$-formula $\theta(\bar{x})$ and a tuple $\bar{f} \in (\str{M}^{I})^{\bar{x}}$, we write $\dVert{\theta(\bar{f})}$ for the set,
    \[
    \{i \in I: \str{M} \models \theta(\bar{f}(i))\},
    \]
    which we call the \textbf{support} of $\theta$ at $\bar{f}$.
    Note that $\dVert{\theta(\bar{f})}$ is in $\str{I}$.
    
    \item If $\lang{}{}$ is a signature, then an \textbf{acceptable sequence} of $\lang{}{}$ over $\lang{\ind}{}$ is a tuple,
    \[
    \zeta(\bar{x}) = (\Phi(Y_1,\ldots,Y_m),\theta_1(\bar{x}),\ldots,\theta_m(\bar{x})),
    \]
    where $\Phi$ is an $\lang{\ind}{}$-formula, and $\theta_1,\ldots,\theta_m$ are $\lang{}{}$-formulas, for some $m \in \bb{N}$.
    
    \item Let $\lang{\pi}{}$ be the signature given by taking an $l(\bar{x})$-ary relation symbol $R_{\zeta}$ for each acceptable sequence $\zeta(\bar{x})$ of $\lang{}{}$ over $\lang{\ind}{}$.
    The \textbf{generalised power} of $\str{M}$ by $\str{I}$, denoted by $\str{M}^{\str{I}}$, is an $\lang{\pi}{}$-structure with universe $\str{M}^I$. 
    For each acceptable sequence,
    \[
    \zeta(\bar{x}) = (\Phi(Y_1,\ldots,Y_m),\theta_1(\bar{x}),\ldots,\theta_m(\bar{x})),
    \]
    the relation symbol $R_{\zeta}$ is interpreted as,
    \[
    \{\bar{f} \in \str{M}^I: \str{I} \models \Phi(\dVert{\theta_1(\bar{f})},\ldots,\dVert{\theta_m(\bar{f})})\}.
    \]
\end{enumerate}
\end{definition}

\begin{theorem}[Special case of \cite{FefVau} Theorem 3.1 pp 5]\label{Theorem-FefVauPowers}
Let $\lang{}{}$ be a first-order signature, and let $\lang{\ind}{}$ be a relational signature extending $\{\subseteq\}$.
Then let $\lang{\pi}{}$ be the associated signature for generalised powers of $\lang{}{}$-structures by $\lang{\ind}{}$-algebras of sets.

For each $\lang{\pi}{}$-formula $\phi(\bar{x})$, there is an atomic $\lang{\pi}{}$-formula $\phi^*(\bar{x})$, i.e. one of the form $R_{\zeta}(\bar{x})$ for some acceptable sequence $\zeta(\bar{x})$, such that for each $\lang{}{}$-structure $\str{M}$ and $\lang{\ind}{}$-algebra of sets $\str{I}$,
\[
\str{M}^{\str{I}} \models \forall \bar{x} (\phi(\bar{x})  \leftrightarrow \phi^*(\bar{x})).
\]

In particular, if $\phi$ is an $\lang{\pi}{}$-sentence then $\phi^*$ is of the form $R_{\zeta}$ where $\zeta$ is an acceptable sequence,
\[
(\Phi(Y_1,\ldots,Y_m),\theta_1,\ldots,\theta_m)
\]
where $\theta_1,\ldots,\theta_m$ are $\lang{}{}$-sentences. 
\end{theorem}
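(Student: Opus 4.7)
The plan is to prove the statement by induction on the complexity of the $\lang{\pi}{}$-formula $\phi(\bar{x})$, producing at each stage a single acceptable sequence $\zeta$ such that $R_\zeta(\bar{x})$ is equivalent to $\phi(\bar{x})$ in every generalised power $\str{M}^{\str{I}}$. For the base case there is nothing to do, as $\phi$ is already of the form $R_\zeta(\bar{x})$. If $\psi \equiv R_{(\Phi, \theta_1, \ldots, \theta_m)}$ then $\neg \psi \equiv R_{(\neg\Phi, \theta_1, \ldots, \theta_m)}$; if $\psi_1 \equiv R_{\zeta_1}$ and $\psi_2 \equiv R_{\zeta_2}$, then $\psi_1 \wedge \psi_2$ is handled by concatenating the two lists of $\lang{}{}$-formulas and taking the conjunction of the two $\lang{\ind}{}$-formulas (with variables in the second reindexed to avoid clashes). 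This disposes of all Boolean connectives.

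The interesting case is $\phi(\bar{x}) = \exists y\, \psi(\bar{x}, y)$. By induction $\psi \equiv R_\zeta(\bar{x},y)$ with $\zeta = (\Phi(Y_1, \ldots, Y_m), \theta_1(\bar{x}, y), \ldots, \theta_m(\bar{x}, y))$. For each $s \subseteq \{1, \ldots, m\}$ let
\[
\chi_s(\bar{x}) \;:=\; \exists y\,\Bigl(\bigwedge_{j \in s} \theta_j(\bar{x},y) \wedge \bigwedge_{j \notin s} \neg \theta_j(\bar{x},y)\Bigr).
\]
Fix $\bar{f} \in (\str{M}^I)^{\bar{x}}$. The key observation is that there exists $g \in \str{M}^I$ with $\str{I} \models \Phi(\dVert{\theta_1(\bar{f},g)}, \ldots, \dVert{\theta_m(\bar{f},g)})$ if and only if there exists a family $(Z_s)_{s \subseteq \{1,\ldots,m\}}$ of elements of $\str{I}$ which partitions $I$, satisfies $Z_s \subseteq \dVert{\chi_s(\bar{f})}$ for every $s$, and satisfies $\str{I} \models \Phi\bigl(\bigcup_{s \ni 1} Z_s, \ldots, \bigcup_{s \ni m} Z_s\bigr)$. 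The forward direction takes $Z_s$ to be the set of indices $i \in I$ at which exactly the $\theta_j$ with $j \in s$ hold of $(\bar{f}(i), g(i))$; the converse assembles $g$ pointwise from the witnesses provided by $\chi_s$. The bracketed condition on $(Z_s)$ is expressible as an $\lang{\ind}{}$-formula $\Psi$ in the $2^m$ variables corresponding to the supports $\dVert{\chi_s(\bar{f})}$, so we take $\zeta' = (\Psi, (\chi_s)_{s \subseteq \{1, \ldots, m\}})$, enumerated in some fixed order.

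The main obstacle is precisely this existential case: compressing quantification over functions $g \colon I \to \str{M}$ into an existential statement that lives entirely inside the algebra of sets $\str{I}$. The trick works because the $\chi_s$ exhaust all Boolean combinations of the $\theta_j$ that the pointwise type of $g$ over $\bar{f}$ could realise, and the collection of $\lang{\ind}{}$-formulas is rich enough to quantify existentially over the $2^m$ blocks of a partition refining the supports $\dVert{\chi_s(\bar{f})}$. The final ``in particular'' clause for sentences is then immediate: the inductive construction never introduces free variables into the $\theta$'s beyond those already present in $\phi$, so when $\phi$ is a sentence, the $\theta_i$'s of the resulting $\zeta$ are sentences as well.
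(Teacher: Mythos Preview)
The paper does not give its own proof of this theorem; it is simply stated as a special case of Theorem~3.1 of \cite{FefVau} and then used as a black box. Your sketch is a correct rendering of the standard Feferman--Vaught induction: the Boolean cases are immediate, and for the existential step you have exactly the right idea, namely to replace the search for a function $g\in\str{M}^I$ by the search for a partition $(Z_s)_{s\subseteq\{1,\dots,m\}}$ of $I$ into ``type blocks'' sitting below the supports $\dVert{\chi_s(\bar f)}$. The forward and backward directions you indicate both go through (the converse uses choice to assemble $g$ pointwise), and the resulting condition on the $Z_s$ is first-order in $(\pow(I),\subseteq)$ since Boolean operations and the partition predicate are definable from $\subseteq$ alone. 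The final remark about sentences is likewise correct: the only place new $\lang{}{}$-formulas are manufactured is the existential step, and there the $\chi_s(\bar x)$ have strictly fewer free variables than the $\theta_j(\bar x,y)$ they come from.
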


\begin{proposition}\label{Proposition-GenPowerSentence}
Let $\lang{}{}$, $\lang{\ind}{}$, and $\lang{\pi}{}$ be given as in \cref{Theorem-FefVauPowers}.
Then for each $\lang{\pi}{}$-sentence $\phi$ and $\lang{}{}$-structure $\str{M}$, there is an $\lang{\ind}{}$-sentence $\Phi$ such that for each $\lang{\ind}{}$-algebra of sets $\str{I}$, 
\[
\str{M}^{\str{I}} \models \phi \Longleftrightarrow \str{I} \models \Phi.
\]
\end{proposition}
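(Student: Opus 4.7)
The plan is to apply \cref{Theorem-FefVauPowers} to $\phi$ directly and then observe that, because the $\theta_i$'s in the resulting acceptable sequence are $\lang{}{}$-sentences, their supports are parameter-free.

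First, I would invoke \cref{Theorem-FefVauPowers} to obtain an atomic $\lang{\pi}{}$-formula $\phi^* = R_\zeta$ with acceptable sequence
\[
\zeta = (\Phi_0(Y_1,\ldots,Y_m),\theta_1,\ldots,\theta_m),
\]
where $\theta_1,\ldots,\theta_m$ are $\lang{}{}$-sentences, such that $\str{M}^{\str{I}} \models \phi \leftrightarrow R_\zeta$ for every $\str{M}$ and $\str{I}$. By definition of $R_\zeta$, we have $\str{M}^{\str{I}} \models \phi$ if and only if $\str{I} \models \Phi_0(\dVert{\theta_1},\ldots,\dVert{\theta_m})$, where each support is taken at the empty tuple (since the $\theta_i$ have no free variables).

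Next I would note the key observation that makes the conclusion parameter-free: because the $\theta_i$'s are sentences, for any $i \in I$ the truth of $\str{M} \models \theta_i$ does not depend on any function value, so
\[
\dVert{\theta_i} = \begin{cases} I & \text{if } \str{M} \models \theta_i, \\ \emptyset & \text{if } \str{M} \not\models \theta_i. \end{cases}
\]
Since $\lang{\ind}{}$ contains $\subseteq$, the unique top element $I$ of $\str{I}$ is $\lang{\ind}{}$-definable by $\top(Y) \coloneqq \forall Z\, (Z \subseteq Y)$, and the unique bottom element $\emptyset$ by $\bot(Y) \coloneqq \forall Z\, (Y \subseteq Z)$. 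For each $i \in \{1,\ldots,m\}$, set $\chi_i(Y_i)$ to be $\top(Y_i)$ if $\str{M} \models \theta_i$ and $\bot(Y_i)$ otherwise.

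Finally, I would take
\[
\Phi \coloneqq \exists Y_1 \cdots \exists Y_m \Big( \bigwedge_{i=1}^m \chi_i(Y_i) \,\wedge\, \Phi_0(Y_1,\ldots,Y_m) \Big),
\]
which is an $\lang{\ind}{}$-sentence. For any $\lang{\ind}{}$-algebra of sets $\str{I}$, the witnesses forced by the $\chi_i$'s are exactly $\dVert{\theta_1},\ldots,\dVert{\theta_m}$, so $\str{I} \models \Phi$ if and only if $\str{I} \models \Phi_0(\dVert{\theta_1},\ldots,\dVert{\theta_m})$, which by the discussion above is equivalent to $\str{M}^{\str{I}} \models \phi$. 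There is no real obstacle here; the only subtlety is recognising that the sentence version of \cref{Theorem-FefVauPowers} removes all dependence on tuples $\bar{f}$, so the Feferman--Vaught machinery collapses to a purely Boolean condition on $\str{I}$ determined by $\Th(\str{M})$.
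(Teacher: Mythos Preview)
Your proposal is correct and follows essentially the same approach as the paper: apply \cref{Theorem-FefVauPowers} to obtain an acceptable sequence whose $\theta_i$ are sentences, observe that each support $\dVert{\theta_i}$ is then $\top$ or $\bot$ depending only on $\str{M}$, and replace these by their $\langMO$-definitions to obtain an $\lang{\ind}{}$-sentence. You simply spell out the substitution step more explicitly than the paper does.
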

\begin{proof}
By \cref{Theorem-FefVauPowers} there is an acceptable sequence,
\[
\zeta = (\Phi(Y_1,\ldots,Y_m),\theta_1,\ldots,\theta_m),
\]
where $\theta_1,\ldots,\theta_m$ are sentences, such that $\str{M}^{\str{I}} \models \phi$ if and only if,
\[
\str{I} \models \Phi(\dVert{\theta_1},\ldots,\dVert{\theta_m}).
\]
But now that $\str{M}$ is fixed, each of the $\dVert{\theta_i}$ are either $\bot$ or $\top$, both of which are definable elements in $\str{I}$.
Abusing notation and writing $\Phi$ for the sentence obtained by modifying the previous $\Phi$ to refer to these definable elements, we are done.
\end{proof}

\newsubsection{\texorpdfstring{The case of parameters over $\WSO(\lcro)$}{The case of parameters over W(I)}}\label{Subsection-ParametersGP}

Let $\bar{A}$ be a tuple from $\WSO(\lcro)$.
For the sake of convenience, we will think of $(\WSO(\lcro),\bar{A})$ as a pointed $\langMO$-structure. 
That is we take the $\langMO$-structure $\WSO(\lcro)$ and then expand the signature by constants naming each entry from the tuple $\bar{A}$. 

\begin{definition}
We define a map $\funky{\ }: \bigcup_{n \in \bb{N}} \WSO(\lcro)^n \rightarrow \WSO(\lcro)$ as follows,
\[
\funky{\bar{A}} = \cz \cup \bigcup \bar{A}.
\]
So $\funky{\bar{A}}$ is the union of each of the entries in $\bar{A}$, together with $\{0\}$. 
\end{definition}

Note that for each $n \in \bb{N}$, the restriction of $\funky{\ }$ to tuples of length $n$ is clearly definable in $\WSO(\lcro)$ (it is given by a term in the $\langWI$ version of $\WSO(\lcro)$, which is definitionally equivalent to the $\langMO$ version).

The usefulness of $\funky{\bar{A}}$ comes from the fact that when we restrict $\bar{A}$ to each of the intervals $[i,j)$ where $i$ and $j$ are elements of $\funky{\bar{A}}$, we get a tuple of definable elements.
This is because each $A$ from $\bar{A}$ restricts down to either $\bot$ (if $i \notin A$) or $\cz$ (if $i \in A$).
Note that here we are reinterpreting the constant symbol $\cz$ in the structure $\WSO(\lcro) \upharpoonright [i,j)$, it is interpreted as the smallest atom with respect to the linear ordering of the interval which in this case is $\{i\}$.

\begin{definition}
Let $\bar{A}$ be a tuple in $\WSO(\lcro)$. 
For the sake of readability we will write $\WSO\funky{\bar{A}}$ for $\WSO(\lcro) \upharpoonright \funky{\bar{A}}$. 
Note that $\bar{A}$ itself can be thought of as a tuple in $\WSO\funky{\bar{A}}$.

The $\langMO$-structure $\WSO\funky{\bar{A}}$ is a model of $T_{\MSO(\Fin)}$.
It is an algebra of sets (see \cref{Definition-AlgebraSets}), and remains so when we name the tuple $\bar{A}$ as constants. 
We will write $\WSO(\lcro)^{(\bar{A})}$ for the generalised power of the $\langMO$-structure $\WSO(\lcro)$ by the algebra of sets $(\WSO\funky{\bar{A}},\bar{A})$.
\end{definition}

Now let $\lcro$ be countable. 
As the theory of dense linear orders with left-endpoint but no right endpoint is $\aleph_0$-categorical, we can simply choose to take $\lcro = [0,1)_{\bb{Q}}$ without loss of generality.
For any $i<j$ in $\lcro$ it is clear that $\WSO(\lcro) \upharpoonright [i,j)$ (see \cref{Subsection-TWISelfInterpret}) is isomorphic to $\WSO(\lcro)$ (and similarly if $j = +\infty$).

\begin{definition}
(We require that $\lcro$ is countable.) 
Let $\bar{A}$ be a tuple in $\WSO(\lcro)$.
For each $i \in \funky{\bar{A}}$ let $h_i:\WSO(\lcro) \rightarrow \WSO(\lcro) \upharpoonright [i,j)$ be an isomorphism witnessing the fact that $\WSO(\lcro) \cong \WSO(\lcro) \upharpoonright [i,j)$ where $j$ is the successor of $i$ in $\funky{\bar{A}}$ (or $j = +\infty$ for $i = \max(\funky{\bar{A}})$).
Then we will define $g_{\bar{A}}:\WSO(\lcro)^{(\bar{A})} \rightarrow \WSO(\lcro)$ to be the function given by,
\[
g_{\bar{A}}(f) = \bigcup_{i \in \funky{\bar{A}}} h_i(f(i)).
\]
\end{definition}

For each tuple $\bar{A}$ in $\WSO(\lcro)$ the function $g_{\bar{A}}$ is a bijection from $\WSO(\lcro)^{(\bar{A})}$ to $\WSO(\lcro)$.

\begin{proposition}
(We require that $\lcro$ is countable.) 
For each tuple $\bar{A}$ in $\WSO(\lcro)$ the generalised power $\WSO(\lcro)^{(\bar{A})}$ of $\WSO(\lcro)$ by $(\WSO\funky{\bar{A}},\bar{A})$ is definitionally equivalent to the structure $(\WSO(\lcro),\bar{A})$, after identifying their universes using the function $g_{\bar{A}}$. 
Moreover this equivalence is uniform in the sense that the same formulas can be used to establish the equivalence for any choice of tuple $\bar{A}$ (of suitable length).
\end{proposition}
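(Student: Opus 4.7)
The plan is to verify the atomic-formula criterion of \cref{Definition-DefEquiv}: with the universes identified via the bijection $g_{\bar{A}}$, it suffices to exhibit, for each atomic formula on one side of the equivalence, a formula on the other side defining the same set, and to check that the translations depend only on the length of $\bar{A}$. The essential tools will be the relativisations $\downharpoonright [i,j)$ and $\downharpoonright \funky{\bar{A}}$ from \cref{Definition-ResInterval,Definition-ResElement}, which convert restricted satisfaction back into satisfaction in the ambient structure $\WSO(\lcro)$.

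For the translation from the generalised power to $(\WSO(\lcro), \bar{A})$, I would start from an atomic $\lang{\pi}{}$-formula $R_{\zeta}(\bar{x})$ with acceptable sequence $\zeta = (\Phi(\bar{Y}), \theta_1(\bar{x}), \ldots, \theta_m(\bar{x}))$. Given a tuple $\bar{f}$ with image $\bar{B} = g_{\bar{A}}(\bar{f})$, the key observation is that for each $i \in \funky{\bar{A}}$ with successor $j$ in $\funky{\bar{A}}$ (or $j = +\infty$ at the maximum), the isomorphism $h_i$ yields $\WSO(\lcro) \models \theta_k(\bar{f}(i))$ iff $\WSO(\lcro) \models \theta_k(\bar{B}) \downharpoonright [i,j)$. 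Thus the support $\dVert{\theta_k(\bar{f})}$, viewed as an element of $\WSO\funky{\bar{A}}$, is picked out by an $\langMO$-formula in $(\WSO(\lcro), \bar{A})$ using $\bar{B}$ and $\bar{A}$ as parameters. The outer condition $\str{I} \models \Phi(\dVert{\theta_1(\bar{f})}, \ldots, \dVert{\theta_m(\bar{f})})$ is then expressed as $\Phi \downharpoonright \funky{\bar{A}}$ with the support-defining formulas substituted for the $Y_k$.

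For the reverse translation, the atomic formulas on the $\langMO$ side (including those involving constants for $\bar{A}$) reduce to the $\subseteq$ and $\exle$ cases. The $\subseteq$ case is immediate: $B \subseteq B'$ iff $f(i) \subseteq f'(i)$ for every $i \in \funky{\bar{A}}$, captured by the acceptable sequence forcing the support of $x \subseteq y$ to equal all of $\funky{\bar{A}}$. Constants $A_k$ correspond under $g_{\bar{A}}^{-1}$ to the function valued $\cz$ on points of $A_k$ and $\bot$ elsewhere, which is uniformly definable from the corresponding constants on the algebra-of-sets side. The $\exle$ case, which I expect to be the main obstacle, requires splitting into (a) witnesses lying in two distinct intervals of the partition, captured by the $\exle$ relation between the nonemptiness supports of $f$ and $f'$ inside $\str{I}$, and (b) witnesses lying within a single interval, captured by nonemptiness in $\str{I}$ of the support of $x \exle y$. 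The disjunction of the two corresponding atomic $\lang{\pi}{}$-formulas yields the desired translation. Uniformity across choices of $\bar{A}$ is automatic, since every step above is a syntactic operation referring only to the formula shape and the length of $\bar{A}$.
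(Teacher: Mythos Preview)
Your proposal is correct and follows essentially the same approach as the paper's proof: both directions are handled as you describe, with the paper giving the same acceptable sequences for $\subseteq$, $\exle$ (the disjunction you identify is exactly the paper's $\Phi: Y_1 \neq \bot \vee Y_2 \exle Y_3$), and the constants, and the same relativisation argument for translating $R_\zeta$ back into $(\WSO(\lcro),\bar{A})$. The only cosmetic difference is that the paper packages the two $\exle$ cases into a single acceptable sequence rather than a disjunction of two atomic $\lang{\pi}{}$-formulas.
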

\begin{proof}
We first prove that both $\subseteq$ and $\exle$ are definable in $\WSO(\lcro)^{(\bar{A})}$ by giving acceptable sequences. 
For $\subseteq$ we can take $\zeta = (\Phi,\theta_1)$ with $\theta_1(x_1,x_2)$ being $x_1 \subseteq x_2$ and $\Phi$ being $Y_1 = \top$ (this is clearly definable in $\langMO$).
For $\exle$ we take $\zeta = (\Phi,\theta_1,\theta_2,\theta_3)$ with $\theta_1(x_1,x_2)$ being $x_1 \exle x_2$, $\theta_2(x_1,x_2)$ being $x_1 \neq \bot$, and $\theta_3(x_1,x_2)$ being $x_2 \neq \bot$, and,
\[
\Phi: Y_1 \neq \bot \vee Y_2 \exle Y_3.
\]
Now we show that each constant standing for an entry $A \in \bar{A}$ is definable using an acceptable sequence.
For this we can take $\zeta = (\Phi,\theta_1,\theta_2)$ with $\theta_1(x_1)$ being $x_1 = \cz$, $\theta_2(x_1)$ being $x_1 = \bot$, and,
\[
\Phi: Y_1 = A \wedge Y_2 = A^c.
\]
We have used many short hands, all of which can be worked into $\langMO$-formulas explicitly by the reader. 

In the other direction, take an acceptable sequence,
\[
\zeta = (\Phi(Y_1,\ldots,Y_m),\theta_1,\ldots,\theta_m).
\]
If we take a tuple $\bar{B}$ from $\WSO(\lcro)$ then using the techniques of restriction and relativisation (and a lot of patience) it can be seen that each of the supports,
\[
\dVert{\theta_1(g_{\bar{A}}^{-1}(\bar{B}))},\ldots,\dVert{\theta_m(g_{\bar{A}}^{-1}(\bar{B}))},
\]
are definable in $(\WSO(\lcro),\bar{A})$ over $\bar{B}$ (viewing each as a subset of $\funky{A}$ living inside $\WSO(\lcro)$).
Moreover $\WSO\funky{\bar{A}}$ is clearly interpretable in $(\WSO(\lcro),\bar{A})$, and in such a way that the supports of $\theta_1,\ldots,\theta_m$ can be fed into $\WSO\funky{\bar{A}}$.
Therefore $R_{\zeta}$ is definable in $(\WSO(\lcro),\bar{A})$.
\end{proof}

The following proposition is then just a matter of switching our perspective back to thinking of $\bar{A}$ as a tuple of parameters rather than as a tuple of constants baked into the signature.

\begin{proposition}\label{Proposition-WIParametersGeneralisedProduct}
For each $\langMO$-formula $\phi(\bar{X})$ there is an $\langMO$-formula $\psi(\bar{X})$ such that for each tuple $\bar{A}$ in $\WSO(\lcro)$,
\[
\WSO(\lcro) \models \phi(\bar{A}) \Longleftrightarrow \WSO\funky{\bar{A}} \models \psi(\bar{A}).
\]
\end{proposition}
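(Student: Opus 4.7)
The plan is to treat the proposition as a formal rewording of the previous proposition combined with \cref{Proposition-GenPowerSentence}. The signature $\langMO$ has the bar $\bar{A}$ baked in as parameters, and the previous proposition allows us to pass to the generalised power picture in which those parameters become part of the algebra of sets; applying \cref{Proposition-GenPowerSentence} with $\WSO(\lcro)$ as the fixed base structure then collapses the generalised-power sentence back to a single $\langMO$-formula evaluated in $\WSO\funky{\bar{A}}$.

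More concretely, here is the order of steps. First, given $\phi(\bar{X})$, I would view $\phi(\bar{A})$ as a sentence in the expanded signature $\langMO \cup \{c_1,\ldots,c_n\}$ interpreted in the pointed structure $(\WSO(\lcro),\bar{A})$. Second, by the previous proposition, this pointed structure is uniformly definitionally equivalent (via $g_{\bar{A}}$) to $\WSO(\lcro)^{(\bar{A})}$, so there is an $\lang{\pi}{}$-sentence $\phi^*$ in the generalised-power signature of $\WSO(\lcro)$ by $(\WSO\funky{\bar{A}},\bar{A})$ such that
\[
(\WSO(\lcro),\bar{A}) \models \phi(\bar{A}) \Longleftrightarrow \WSO(\lcro)^{(\bar{A})} \models \phi^*.
\]
Crucially, the definitional equivalence is uniform in $\bar{A}$, so $\phi^*$ depends only on $\phi$ (and on the length of $\bar{X}$), not on the particular tuple $\bar{A}$. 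Third, I would apply \cref{Proposition-GenPowerSentence} with the $\lang{}{}$-structure fixed to $\WSO(\lcro)$: this produces an $\lang{\ind}{}$-sentence $\Phi$, independent of $\bar{A}$, such that for every $\lang{\ind}{}$-algebra of sets $\str{I}$ we have $\WSO(\lcro)^{\str{I}} \models \phi^*$ iff $\str{I} \models \Phi$. Taking $\str{I}=(\WSO\funky{\bar{A}},\bar{A})$ gives
\[
\WSO(\lcro)^{(\bar{A})} \models \phi^* \Longleftrightarrow (\WSO\funky{\bar{A}},\bar{A}) \models \Phi.
\]

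Finally, I would reinterpret the constants. The signature $\lang{\ind}{}$ is $\{\subseteq,\exle\}$ together with names for the entries of $\bar{A}$, all of which are part of $\langMO$ once the constants for $\bar{A}$ are replaced by the free variables $\bar{X}$. Defining $\psi(\bar{X})$ to be the $\langMO$-formula obtained from $\Phi$ by this replacement yields the desired formula: chaining the three equivalences gives $\WSO(\lcro) \models \phi(\bar{A})$ iff $\WSO\funky{\bar{A}} \models \psi(\bar{A})$, and $\psi$ depends only on $\phi$. The only real point requiring attention — and what I would regard as the main (mild) obstacle — is the bookkeeping of uniformity: one must check that the translation in the previous proposition is genuinely parameter-free at each inductive step (atomic formulas, Boolean combinations, and quantification), so that $\phi^*$, and hence $\Phi$ and hence $\psi$, depend only on $\phi$. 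Once this is observed, the proof reduces to the switch of perspective indicated in the preamble of the statement.
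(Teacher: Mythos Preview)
Your proposal is correct and follows essentially the same route as the paper: name the parameters as constants, use the uniform definitional equivalence with the generalised power $\WSO(\lcro)^{(\bar{A})}$, apply \cref{Proposition-GenPowerSentence} with the base structure fixed as $\WSO(\lcro)$, and then replace the constants by free variables to recover an $\langMO$-formula $\psi(\bar{X})$. Your version is slightly more explicit about the intermediate passage through an $\lang{\pi}{}$-sentence, which the paper leaves implicit in its invocation of \cref{Proposition-GenPowerSentence}, and you rightly flag uniformity as the one point requiring care.
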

\begin{proof}
We have that $(\WSO(\lcro),\bar{A})$ is definitionally equivalent to $\WSO(\lcro)^{(\bar{A})}$ (and that this equivalence is uniform).
For each $\langMO$-formula $\phi(\bar{X})$, there is clearly an $\langMO \cup \{\bar{A}\}$-sentence $\phi'$ such that,
\[
\WSO(\lcro) \models \phi(\bar{A}) \Longleftrightarrow (\WSO(\lcro),\bar{A}) \models \phi'.
\]
Then by \cref{Proposition-GenPowerSentence} there is an $\langMO \cup \{\bar{A}\}$-sentence $\psi'$ such that,
\[
(\WSO(\lcro),\bar{A}) \models \phi' \Longleftrightarrow (\WSO\funky{\bar{A}},\bar{A}) \models \psi'.
\]
But now there is clearly an $\langMO$-formula $\psi(\bar{X})$ such that,
\[
(\WSO\funky{\bar{A}},\bar{A}) \models \psi' \Longleftrightarrow \WSO\funky{\bar{A}} \models \psi(\bar{A}).
\]
So we are done.
\end{proof}

\begin{proposition}\label{Proposition-ParametersGeneralisedProduct}
For each $\langWI$-formula $\phi(\bar{X})$ there is an $\langMFIN$-formula $\psi(\bar{X})$ such that for each,
\begin{enumerate}
    \item $\langWI$-structure $\str{M}$ which is a model of $\Th(\WSO(\lcro))$ and,
    \item tuple $\bar{A}$ in $\str{M}$,
\end{enumerate}
the following equivalence holds,
\[
\str{M} \models \phi(\bar{A}) \Longleftrightarrow \str{M} \upharpoonright \funky{\bar{A}} \models \psi(\bar{A}).
\]
\end{proposition}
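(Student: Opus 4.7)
The plan is to bootstrap from \cref{Proposition-WIParametersGeneralisedProduct}, which only speaks of $\WSO(\lcro)$ itself and of $\langMO$, to all models of $\Th(\WSO(\lcro))$ and to $\langWI$/$\langMFIN$. The transfer from $\langWI$ to $\langMO$ (and later from $\langMO$ to $\langMFIN$) will be carried out via the uniform definitional equivalences between $\WSO(\lcro)$ in its two signatures, and between $T_{\MSO(\Fin)}$ in its two signatures; the transfer from $\WSO(\lcro)$ to arbitrary $\str{M} \equiv \WSO(\lcro)$ will be carried out by packaging the conclusion of \cref{Proposition-WIParametersGeneralisedProduct} into an $\langMO$-sentence via relativisation.

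First, given $\phi(\bar{X})$ in $\langWI$, I would use the uniform definitional equivalence from \cref{Definition-LWI} to produce an $\langMO$-formula $\phi^\dagger(\bar{X})$ such that $\str{M} \models \phi(\bar{A}) \Leftrightarrow \str{M} \models \phi^\dagger(\bar{A})$ for every model $\str{M}$ of $\Th(\WSO(\lcro))$ (viewed in either signature). Next, applying \cref{Proposition-WIParametersGeneralisedProduct} to $\phi^\dagger$, I obtain an $\langMO$-formula $\psi^\dagger(\bar{X})$ satisfying
\[
\WSO(\lcro) \models \phi^\dagger(\bar{A}) \Longleftrightarrow \WSO\funky{\bar{A}} \models \psi^\dagger(\bar{A})
\]
for every tuple $\bar{A}$ in $\WSO(\lcro)$. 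Since $\funky{\bar{A}}$ is a finite (hence $\MSO$-style definable) element of $\WSO(\lcro)$, the restriction construction of \cref{Definition-ResElement} applies (as noted in the remark following it, this generalises to any atomic distributive lattice, which every model of $\Th(\WSO(\lcro))$ is), yielding an $\langMO$-formula $\psi^\dagger(\bar{X}) \downharpoonright \funky{\bar{X}}$ with the usual relativisation property.

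The key observation is then that the displayed equivalence, combined with the relativisation property, shows that the $\langMO$-sentence
\[
\forall \bar{X} \bigl( \phi^\dagger(\bar{X}) \leftrightarrow \psi^\dagger(\bar{X}) \downharpoonright \funky{\bar{X}} \bigr)
\]
is true in $\WSO(\lcro)$; hence it is a member of $\Th(\WSO(\lcro))$ and so holds in every $\str{M}$ elementarily equivalent to it. For each such $\str{M}$ and each tuple $\bar{A}$ we therefore obtain
\[
\str{M} \models \phi(\bar{A}) \Longleftrightarrow \str{M} \models \psi^\dagger(\bar{A}) \downharpoonright \funky{\bar{A}} \Longleftrightarrow \str{M} \upharpoonright \funky{\bar{A}} \models \psi^\dagger(\bar{A}).
\]
Finally I would convert $\psi^\dagger$ back to an $\langMFIN$-formula $\psi$ using the uniform definitional equivalence between $T_{\MSO(\Fin)}$ in $\langMO$ and in $\langMFIN$, which applies because $\str{M} \upharpoonright \funky{\bar{A}} \models T_{\MSO(\Fin)}$ by \cref{Proposition-WIinterpretsTMFin}.

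The main obstacle is ensuring that all three translation steps (the two definitional equivalences and the relativisation) are genuinely uniform across the class of models of $\Th(\WSO(\lcro))$, so that the single $\langMFIN$-formula $\psi$ really works everywhere. This uniformity is exactly what \cref{Definition-DefEquiv}(2) guarantees for the definitional equivalences, and what the syntactic nature of $\downharpoonright$ guarantees for relativisation; beyond that, the argument is simply chaining the three translations around the already-proved $\WSO(\lcro)$-level statement.
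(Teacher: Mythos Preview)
Your proposal is correct and follows essentially the same route as the paper: translate $\phi$ to $\langMO$, invoke \cref{Proposition-WIParametersGeneralisedProduct}, package the resulting equivalence as a single $\langMO$-sentence via relativisation (the paper calls this sentence $\forall\bar{X}\,\Omega_{\phi,\psi}(\bar{X})$, which is exactly your $\forall\bar{X}\bigl(\phi^\dagger(\bar{X})\leftrightarrow\psi^\dagger(\bar{X})\downharpoonright\funky{\bar{X}}\bigr)$), and then transfer to arbitrary $\str{M}\equiv\WSO(\lcro)$ by elementary equivalence. You are slightly more explicit than the paper about the final conversion from $\langMO$ to $\langMFIN$, but otherwise the arguments coincide.
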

\begin{proof}
This follows surprisingly easily from \cref{Proposition-WIParametersGeneralisedProduct}.
Let $\phi(\bar{X})$ be an $\langWI$-formula. 
This can be replaced with an equivalent $\langMO$-formula.
Now there is an $\langMO$-formula $\psi(\bar{X})$ such that for each tuple $\bar{A}$ in $\WSO(\lcro)$,
\[
\WSO(\lcro) \models \phi(\bar{A}) \Longleftrightarrow \WSO\funky{\bar{A}} \models \psi(\bar{A}).
\]
Note that this equivalence is actually first order expressible.
By this we mean that we can produce an $\langMO$-formula $\Omega_{\phi,\psi}(\bar{X})$ \emph{saying} that `$\phi$ holds at $\bar{X}$ if and only if $\psi$ holds at $\bar{X}$ in the restriction to $\funky{\bar{X}}$'.
This follows from restriction to an element being an interpretation and the definability of the function $\funky{\ }$.

But now $\WSO(\lcro) \models \forall \bar{X} (\Omega_{\phi,\psi}(\bar{X}))$ for this choice of $\phi$ and $\psi$, hence from our assumption that $\str{M} \models \Th(\WSO(\lcro))$ it follows that $\str{M} \models \forall \bar{X} (\Omega_{\phi,\psi}(\bar{X}))$.
Unpacking what $\Omega_{\phi,\psi}(\bar{X})$ \emph{says}, for each tuple $\bar{A}$ in $\str{M}$ we have that, 
\[
\str{M} \models \phi(\bar{A}) \Longleftrightarrow \str{M} \upharpoonright \funky{\bar{A}} \models \psi(\bar{A}).
\]
This completes the proof.
\end{proof}

\newsection{\texorpdfstring{Proof that $\WSO(\lcro)$ is model-complete}{Proof that W(I) is model-complete}}\label{Section_WIModelComplete}

In \cref{Subsection-WIModelComplete} we will prove our main result \cref{Theorem-WIModelComplete}.
In \cref{Section-transfer} we will need a slightly stronger result, replacing model-complete with positive model-complete in \cref{Theorem-WIModelComplete}.
We prove this stronger result in \cref{Subsection-WIPositiveModelComplete}.

\newsubsection{\texorpdfstring{Proof that $\WSO(\lcro)$ is model-complete}{Proof that W(I) is model-complete}}\label{Subsection-WIModelComplete}

\begin{theorem}\label{Theorem-WIModelComplete}
$\WSO(\lcro)$ is model-complete in the signature $\langWI$.
\end{theorem}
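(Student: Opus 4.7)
The plan is to show that any inclusion $\str{M} \subseteq \str{N}$ of $\langWI$-structures which are models of $\Th(\WSO(\lcro))$ is in fact an elementary embedding; this is equivalent to model-completeness. The proof essentially bolts together the three main tools developed in the previous sections: the Feferman--Vaught style reduction (\cref{Proposition-ParametersGeneralisedProduct}), the compatibility of restriction to an element with $\langWI$-inclusions (\cref{Proposition-EmbeddingRestriction}), and the model-completeness of the pseudofinite theory (\cref{Theorem-TMFinModelComplete}).

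First I would fix such an inclusion $\str{M} \subseteq \str{N}$, an $\langWI$-formula $\phi(\bar{X})$, and a tuple $\bar{A}$ from $\str{M}$. Applying \cref{Proposition-ParametersGeneralisedProduct} to $\phi$ yields an $\langMFIN$-formula $\psi(\bar{X})$ (depending only on $\phi$) with
\[
\str{M} \models \phi(\bar{A}) \Longleftrightarrow \str{M} \upharpoonright \funky{\bar{A}} \models \psi(\bar{A}),
\]
and the analogous equivalence for $\str{N}$. Note that $\funky{\bar{A}} = \cz \cup \bigcup \bar{A}$ is given by an $\langWI$-term, so it denotes the same element of $\str{M}$ as of $\str{N}$, and in particular it lies in $\str{M}$.

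Next I would invoke \cref{Proposition-EmbeddingRestriction} to conclude that the original inclusion restricts to an inclusion $\str{M} \upharpoonright \funky{\bar{A}} \subseteq \str{N} \upharpoonright \funky{\bar{A}}$ of $\langMFIN$-structures. By \cref{Proposition-WIinterpretsTMFin}, both sides are models of $T_{\MSO(\Fin)}$, so by the model-completeness of $T_{\MSO(\Fin)}$ in the signature $\langMFIN$ (\cref{Theorem-TMFinModelComplete}) this $\langMFIN$-inclusion is elementary. Hence
\[
\str{M} \upharpoonright \funky{\bar{A}} \models \psi(\bar{A}) \Longleftrightarrow \str{N} \upharpoonright \funky{\bar{A}} \models \psi(\bar{A}),
\]
and chaining this with the two applications of \cref{Proposition-ParametersGeneralisedProduct} gives $\str{M} \models \phi(\bar{A}) \Leftrightarrow \str{N} \models \phi(\bar{A})$, as required.

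There is no serious obstacle at this stage; all the real difficulty has been absorbed into the auxiliary propositions. The only point that requires care is that the formula $\psi$ produced by the reduction is an $\langMFIN$-formula rather than an $\langWI$-formula, which is precisely why \cref{Theorem-TMFinModelComplete} (stated for $\langMFIN$) is the correct hammer to use. This asymmetric pairing of signatures is the whole design choice underlying \cref{Proposition-ParametersGeneralisedProduct}, and the signature $\langWI$ has been engineered so that \cref{Proposition-EmbeddingRestriction} holds and the translation passes through cleanly.
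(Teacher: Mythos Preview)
Your proposal is correct and matches the paper's proof essentially step for step: fix an inclusion, reduce via \cref{Proposition-ParametersGeneralisedProduct} to an $\langMFIN$-formula on the restriction to $\funky{\bar{A}}$, use \cref{Proposition-EmbeddingRestriction} and \cref{Proposition-WIinterpretsTMFin} to see this restriction is an $\langMFIN$-inclusion of models of $T_{\MSO(\Fin)}$, and then apply \cref{Theorem-TMFinModelComplete}. Your extra remark that $\funky{\bar{A}}$ is an $\langWI$-term (and hence agrees in $\str{M}$ and $\str{N}$) makes explicit a point the paper leaves implicit.
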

\begin{proof}
We will show that every embedding of models of $\Th(\WSO(\lcro))$ is elementary.
Fix an $\langWI$-formula $\phi(\bar{X})$. 
Moreover let,
\begin{enumerate}
\item $\str{M} \subseteq \str{N}$ be an extension of models of $\Th(\WSO(\lcro))$ (as $\langWI$-structures) and,
\item $\bar{A}$ be a tuple in $\str{M}$.
\end{enumerate}
We will show that,
\[\label{elemeq}
\str{M} \models \phi(\bar{A}) \Longleftrightarrow \str{N} \models \phi(\bar{A}). \tag{*}
\]
By \cref{Proposition-ParametersGeneralisedProduct}, there is an $\langMFIN$-formula $\psi(\bar{X})$ (depending only on $\phi$, not on $\str{M}$, nor $\str{N}$, nor $\bar{A}$), such that,
\[\label{innereq}
\str{M} \models \phi(\bar{A}) \Longleftrightarrow \str{M} \upharpoonright \funky{\bar{A}} \models \psi(\bar{A}), \tag{$\dagger$}
\]
and,
\[\label{innereq2}
\str{N} \models \phi(\bar{A}) \Longleftrightarrow \str{N} \upharpoonright \funky{\bar{A}} \models \psi(\bar{A}). \tag{$\dagger'$}
\]
By \cref{Proposition-EmbeddingRestriction}, the inclusion $\str{M} \subseteq \str{N}$ as $\langWI$-structures restricts to the inclusion $\str{M} \upharpoonright \funky{\bar{A}} \subseteq \str{N} \upharpoonright \funky{\bar{A}}$ as $\langMFIN$-structures.
By \cref{Proposition-WIinterpretsTMFin} both $\str{M} \upharpoonright \funky{\bar{A}}$ and $\str{N} \upharpoonright \funky{\bar{A}}$ are models of $T_{\MSO(\Fin)}$. 
As an $\langMFIN$-theory, $T_{\MSO(\Fin)}$ is model-complete (\cref{Theorem-TMFinModelComplete}).
Therefore $\str{M} \upharpoonright \funky{\bar{A}} \prec \str{N} \upharpoonright \funky{\bar{A}}$ and in particular,
\[\label{reseleq}
\str{M} \upharpoonright \funky{\bar{A}} \models \psi(\bar{A}) \Longleftrightarrow \str{N} \upharpoonright \funky{\bar{A}} \models \psi(\bar{A}). \tag{$\ddagger$}
\]
Now collecting everything together we have,
\begin{align*}
\str{M} \models \phi(\bar{A}) &\Longleftrightarrow \str{M} \upharpoonright \funky{\bar{A}} \models \psi(\bar{A}), & \text{(by \eqref{innereq})}\\
&\Longleftrightarrow \str{N} \upharpoonright \funky{\bar{A}} \models \psi(\bar{A}), & \text{(by \eqref{reseleq})}\\
&\Longleftrightarrow \str{N} \models \phi(\bar{A}). & \text{(by \eqref{innereq2})} 
\end{align*}
So we have established \eqref{elemeq} as required.
\end{proof}

\newsubsection{\texorpdfstring{$\WSO(\lcro)$ is positive model-complete}{W(I) is positive model-complete}}\label{Subsection-WIPositiveModelComplete}

With a small amount of effort we can improve \cref{Theorem-WIModelComplete} and show that $\WSO(\lcro)$ is positive model-complete, meaning that every $\langWI$-formula is equivalent over $\WSO(\lcro)$ to a positive existential $\langWI$-formula.
We need a couple of preparatory lemmas.

\begin{lemma}\label{Lemma-notbotelim}
The following are equivalent for each $A \in \pow_{\fin}(\lcro)$,
\begin{enumerate}
\item $A \neq \bot$,
\item $A = \cz$ or $\cz \subseteq \ips(A \cup \cz,A)$.
\end{enumerate}
\end{lemma}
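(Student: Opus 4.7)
The plan is to unpack the definitions of the $\langWI$-symbols appearing in (2) and reduce the claim to a direct statement about the finite set $A \subseteq \lcro$ and the successor of $0$ in $A \cup \{0\}$. Recall that $\cz$ is interpreted as $\{0\}$ (where $0 = \min(\lcro)$), and that $\ips(B,C) = \{i \in B : \suf_B(i) \in C\}$. So the containment $\cz \subseteq \ips(A \cup \cz, A)$ unfolds to the single assertion that $\suf_{A \cup \{0\}}(0)$ exists and lies in $A$.

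For $(1) \Rightarrow (2)$, I would assume $A \neq \emptyset$ and split on whether $A = \{0\}$. If $A = \{0\}$ the left disjunct of (2) holds. Otherwise $A$ contains at least one element strictly greater than $0$, so $A \cup \{0\}$ has a well-defined successor of $0$, and that successor is precisely the minimum of $A \setminus \{0\}$, which lies in $A$. This puts $0$ into $\ips(A \cup \{0\}, A)$, which is the right disjunct.

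For $(2) \Rightarrow (1)$, if $A = \cz$ then $A = \{0\} \neq \emptyset$ immediately. Otherwise $\cz \subseteq \ips(A \cup \cz, A)$ forces $\suf_{A \cup \{0\}}(0)$ to exist and to lie in $A$, hence $A$ is non-empty.

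There is no real obstacle here; the only bookkeeping care is to remember that $\suf_{A \cup \{0\}}(0)$ is undefined precisely when $A \cup \{0\} = \{0\}$, i.e. when $A \in \{\emptyset, \{0\}\}$, which is why the edge case $A = \cz$ has to be carried as a separate disjunct in (2). Everything else is a direct rewriting of the interpretations given in \cref{Definition-LWI}.
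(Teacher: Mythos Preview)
Your proposal is correct and follows essentially the same approach as the paper's own proof: both arguments unpack the definition of $\ips$ to reduce the condition $\cz \subseteq \ips(A \cup \cz, A)$ to ``$\suf_{A \cup \{0\}}(0)$ exists and lies in $A$'', handle the edge case $A = \cz$ separately, and observe that for nonempty $A \neq \{0\}$ the successor of $0$ in $A \cup \{0\}$ is an element of $A$. The only cosmetic difference is that the paper proves $(2) \Rightarrow (1)$ by contrapositive (assuming $A = \bot$ and computing $\ips(\cz,\bot) = \bot$), whereas you argue directly.
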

\begin{proof}
$(1) \Rightarrow (2)$:
If $A \neq \bot$ and $A \neq \cz$ then it is immediate that $A \wo \cz$ is not $\bot$.
This gives us that $\suf_{A \cup \cz}(0)$ is defined (i.e. $0 \in \lcro$ has a successor in $\cz \cup A$).
Then $\suf_{A \cup \cz}(0) \in A$ which by definition gives us $\cz \subseteq \ips(A \cup \cz,A)$.

$(2) \Rightarrow (1)$:
Suppose $A = \bot$. 
From this it follows immediately that $A \neq \cz$.
Moreover $\ips(A \cup \cz,A) = \ips(\cz,\bot)$ which is $\bot$, hence $\cz \not\subseteq \ips(A \cup \cz,A)$.
\end{proof}

\begin{definition}
Recall that for sets $A$ and $B$, the \textbf{relative complement} of $B$ inside $A$, denoted by $A \wo B$, is the set $\{i \in A: i \notin B\}$. 
\end{definition}

\begin{lemma}\label{Prop-WIqffelimneg}
Every quantifier-free $\langWI$-formula is equivalent to a positive (i.e. negation free) existential $\langWI$-formula over the the theory of $\WSO(\lcro)$. 
\end{lemma}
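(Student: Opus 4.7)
The plan is to reduce the problem to rewriting a single disequation $t \neq s$ (between $\langWI$-terms) as a positive existential formula. After placing the given quantifier-free $\langWI$-formula in disjunctive normal form, it suffices to handle each negative literal separately, since positive existential formulas are closed under $\wedge$ and $\vee$.

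For $t \neq s$ the plan is to witness the inequality by an atom (singleton) lying in the symmetric difference of $t$ and $s$. Concretely, I would show that over $\Th(\WSO(\lcro))$,
\[ t \neq s \Longleftrightarrow \exists X \bigl( \At(X) \wedge X \cap (t \cup s) = X \wedge X \cap t \cap s = \bot \bigr). \]
The forward direction is witnessed, in $\WSO(\lcro)$, by the singleton of any element of the symmetric difference; the converse is immediate, because an atom $X$ satisfying $X \cap (t \cup s) = X$ and $X \cap t \cap s = \bot$ must lie in exactly one of $t,s$. This equivalence is itself a first-order sentence that holds in $\WSO(\lcro)$ (using the elementary fact that every nonempty element contains an atom), and therefore transfers to every model of $\Th(\WSO(\lcro))$. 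All conjuncts on the right other than $\At(X)$ are already atomic.

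It remains to give $\At(X)$ a positive quantifier-free form. The observation to exploit is that an element is an atom iff $X = \min(X)$ and $X \neq \bot$, since $\min$ fixes $\bot$ and otherwise returns the singleton of the minimum. The equality $X = \min(X)$ is already positive atomic, and \cref{Lemma-notbotelim} is precisely the tool that replaces $X \neq \bot$ by the positive quantifier-free disjunction $X = \cz \vee \cz \cap \ips(X \cup \cz,X) = \cz$. Substituting yields the required formula. The main work of the proposition is thus packaged into \cref{Lemma-notbotelim}; once that is in hand, the rest of the argument is routine assembly.
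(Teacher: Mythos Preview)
Your argument is correct and follows essentially the same route as the paper: reduce to a single negated atomic $t \neq s$, witness the symmetric difference existentially, and invoke \cref{Lemma-notbotelim} to eliminate the residual $\neq \bot$. The only cosmetic difference is that the paper introduces relative complement $\wo$ as an auxiliary (definable by a positive existential) and witnesses with an arbitrary nonempty $Y \subseteq \Delta(t,s)$, whereas you witness with an atom and express ``$X$ lies in the symmetric difference'' directly by the positive atomic conditions $X \cap (t \cup s) = X$ and $X \cap t \cap s = \bot$, thereby sidestepping the need for $\wo$ altogether.
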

\begin{proof}
First note that for each $A,B \in \pow_{\fin}(\lcro)$, $A \wo B = C$ if and only if,
\[ 
(A \cap  B) \cup C = A \text{ and }B \cap C = \bot.
\]
Therefore every positive existential $(\langWI \cup \{\wo\})$-formula is equivalent to a positive existential $\langWI$-formula (working in $\WSO(\lcro)$, with $\wo$ a binary function symbol interpreted as relative complement).
So it is enough to check that every quantifier-free $\langWI$-formula is equivalent to a positive existential $(\langWI \cup \{\wo\})$-formula.
It is enough to check that the negation of an atomic $(\langWI \cup \{\wo\})$-formula is equivalent to a positive existential $(\langWI \cup \{\wo\})$-formula in $\WSO(\lcro)$.
Atomic formulas are all of the form $q_1(\bar{X}) = q_2(\bar{X})$ for $\langWI$-terms $q_1,q_2$.
Let $\Delta(q_1(\bar{X}),q_2(\bar{X}))$ be shorthand for $(q_1(\bar{X}) \wo q_2(\bar{X})) \cup (q_2(\bar{X}) \wo q_1(\bar{X}))$, i.e. the symmetric difference of $q_1$ and $q_2$.
The formula $\neg (q_1(\bar{X}) = q_2(\bar{X}))$ is equivalent over $\WSO(\lcro)$ to,
\[
\exists Y (Y \neq \bot \wedge Y \subseteq \Delta(q_1(\bar{X}),q_2(\bar{X}))),
\]
but using \cref{Lemma-notbotelim} this in turn is equivalent over $\WSO(\lcro)$ to,
\[
\exists Y ((Y = \cz \vee \cz \subseteq \ips(Y \cup \cz,Y)) \text{ and }Y \subseteq \Delta(q_1(\bar{X}),q_2(\bar{X}))),
\]
which is positive existential as required.
\end{proof}

Combining \cref{Theorem-WIModelComplete,Prop-WIqffelimneg} yields the following slightly stronger result, which we will use in \cref{Section-transfer} to obtain another model-completeness result. 

\begin{proposition}\label{Corollary-WIPositiveMC}
$\WSO(\lcro)$ is positive model-complete in the signature $\langWI$.
\end{proposition}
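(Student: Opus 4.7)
The plan is to combine the two results flagged just before the statement: Theorem~\ref{Theorem-WIModelComplete}, which gives model-completeness of $\WSO(\lcro)$ as an $\langWI$-structure, and Lemma~\ref{Prop-WIqffelimneg}, which says that quantifier-free $\langWI$-formulas are equivalent over $\Th(\WSO(\lcro))$ to positive existential $\langWI$-formulas.

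First I would fix an arbitrary $\langWI$-formula $\phi(\bar X)$. By Theorem~\ref{Theorem-WIModelComplete}, $\phi(\bar X)$ is equivalent over $\Th(\WSO(\lcro))$ to an existential $\langWI$-formula, say $\exists \bar Y \, \theta(\bar X, \bar Y)$ with $\theta$ quantifier-free. This is the standard characterisation of model-completeness (every formula is $\exists_1$, equivalently $\forall_1$), and it will be used as a black box.

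Next I would apply Lemma~\ref{Prop-WIqffelimneg} to the quantifier-free matrix $\theta(\bar X, \bar Y)$. This produces a positive existential $\langWI$-formula, say $\exists \bar Z \, \theta'(\bar X, \bar Y, \bar Z)$ with $\theta'$ a positive quantifier-free $\langWI$-formula, equivalent to $\theta$ over $\Th(\WSO(\lcro))$.

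Finally I would pull the existential quantifiers together: $\phi(\bar X)$ is equivalent over $\Th(\WSO(\lcro))$ to $\exists \bar Y \exists \bar Z \, \theta'(\bar X, \bar Y, \bar Z)$, which is manifestly positive existential. There is no real obstacle here — both ingredients have already been done — the only care needed is to apply Lemma~\ref{Prop-WIqffelimneg} \emph{after} the reduction to an existential formula so that the newly-introduced existential quantifiers from the lemma can be absorbed into the outer existential block.
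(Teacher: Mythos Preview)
Your proposal is correct and follows exactly the same approach as the paper: use Theorem~\ref{Theorem-WIModelComplete} to replace $\phi$ by an existential formula, then apply Lemma~\ref{Prop-WIqffelimneg} to the quantifier-free matrix and absorb the new existential quantifiers into the outer block. The paper's proof is just a terser phrasing of the same two-step argument.
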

\begin{proof}
By \cref{Theorem-WIModelComplete} every $\langWI$-formula is equivalent to an existential $\langWI$-formula over $\WSO(\lcro)$.
By \cref{Prop-WIqffelimneg}, we can substitute the quantifier free part of this existential formula with a positive existential formula and get an equivalent formula over $\WSO(\lcro)$.
This yields a positive existential formula, so we are done. 
\end{proof}

\newsection{\texorpdfstring{Model-completeness of $\LCI(\lcro)$}{Model-completeness of L(I)}}\label{Section-transfer}

In this section we will give a signature ($\langLI$, see \cref{Definition-LI}) in which the lattice $(\LCI(\lcro),\subseteq)$ of finite unions of closed intervals is model-complete.
Much like the model-completeness result for $\WSO(\lcro)$ utilised the model-completeness result for $T_{\MSO(\Fin)}$, the proof that $\LCI(\lcro)$ is model-complete makes use of the model-completeness result for $\WSO(\lcro)$.

The material in this section is reproduced, with almost no changes, from the earlier note \cite{DeaconLIpreprint}.

\newsubsection{\texorpdfstring{$\LCI(\lcro)$}{L(I)}, finite unions of closed intervals of \texorpdfstring{$\lcro$}{I}}

\begin{definition}\label{Definition-LI}
We write $\langLI$ for the signature $\{\cup,\cap,\bot,\cz,\min,\max,l,r\}$.\footnote{This comprises in order, two binary function symbols, two constants, and four unary function symbols.}

$\LCI(\lcro)$ is the $\langLI$-structure with,
\begin{enumerate}
\item universe $\pow_{\fci}(\lcro)$, the collection of finite unions of closed intervals of $\lcro$,
\item $\cup,\cap$ interpreted as the operations of union and intersection, 
\item $\bot$ interpreted as the empty set,
\item $\cz$ interpreted as $\{0\}$,
\item $\min$ and $\max$ interpreted as the operations taking an element of $\pow_{\fci}(\lcro)$ to the singleton containing its minimum and maximum respectively, with respect to the ordering of $\lcro$ (we set $\min(\bot) = \max(\bot) = \bot$, and in the case $A \in \pow_{\fci}(\lcro)$ is unbounded we set $\max(A)=\bot$),
\item $l$ and $r$ are interpreted as the operations taking an element of $\pow_{\fci}(\lcro)$ to the set of its left and right endpoints respectively.
\end{enumerate}
\end{definition}

\begin{notation}\label{notationlr}
For $A \in \pow_{\fci}(\lcro)$ we will sometimes write $A_l$ and $A_r$ in place of $l(A)$ and $r(A)$ respectively. We will write $\bdr{A}$ as shorthand for $A_l \cup A_r$.
\end{notation}

\begin{lemma}\label{Lemma-boundeddef}
The bounded elements of $\pow_{\fci}(\lcro)$ form a definable subset in  $\LCI(\lcro)$, which we will denote by $\bdd$.
\end{lemma}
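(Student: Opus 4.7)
The plan is to write down an explicit $\langLI$-formula defining the bounded elements directly from the interpretation of $\max$. Recalling from \cref{Definition-LI} that $\max(A) = \bot$ precisely when $A$ is either empty or unbounded, while $\max(A)$ is a singleton (hence distinct from $\bot$) whenever $A$ is nonempty and bounded, the natural candidate is the formula
\[
\phi(X) \coloneqq (X = \bot) \vee (\max(X) \neq \bot).
\]

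To verify that $\phi$ defines $\bdd$, I would split into three cases for $A \in \pow_{\fci}(\lcro)$. If $A = \bot$, then $A$ is (vacuously) bounded and the first disjunct of $\phi(A)$ holds. If $A$ is nonempty and bounded, then by the stipulated interpretation $\max(A)$ is a singleton, hence different from $\bot$, so the second disjunct of $\phi(A)$ holds. Finally, if $A$ is unbounded then $\max(A) = \bot$ by definition, while $A \neq \bot$ since $A$ has at least one element, so both disjuncts fail and $\phi(A)$ is false.

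The only conceivable obstacle would be if the signature $\langLI$ lacked the resources to distinguish between the empty set and an unbounded set (both of which produce $\max = \bot$); this is precisely why the extra disjunct $X = \bot$ is required. Since equality with the constant $\bot$ is atomic in $\langLI$, this distinction is trivially available, and the displayed formula defines $\bdd$ as required. In effect the content of the lemma is just bookkeeping: the interpretation of $\max$ in \cref{Definition-LI} has been chosen precisely so that boundedness is readable from $\max$ together with emptiness.
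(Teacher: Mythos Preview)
Your proof is correct and essentially identical to the paper's own argument: both use the formula $(X = \bot) \vee (\max(X) \neq \bot)$, justified by exactly the case split you give. The paper additionally remarks that this formula is quantifier-free, which is immediate from what you wrote.
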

\begin{proof}
Under our interpretation, an element $A \in \pow_{\fci}(\lcro)$ is bounded if and only if $A = \bot$ or $\max(A) \neq \bot$. 
Conversely the unbounded elements are precisely those $A \in \pow_{\fci}(\lcro)$ for which $A \neq \bot$ and $\max(A) = \bot$.

Therefore $\bdd$ is in fact a quantifier-free definable set in $\LCI(\lcro)$. 
\end{proof}

\newsubsection{Interpreting \texorpdfstring{$\WSO(\lcro)$ in $\LCI(\lcro)$}{W(I) in L(I)}}

\begin{proposition}
The set $\pow_{\fin}(\lcro)$ is quantifier-free definable in $\LCI(\lcro)$.
\end{proposition}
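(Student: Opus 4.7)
The plan is to exhibit an explicit quantifier-free $\langLI$-formula defining $\pow_{\fin}(\lcro)$ inside $\pow_{\fci}(\lcro)$. The key observation is that every element $A \in \pow_{\fci}(\lcro)$ decomposes uniquely into its maximal closed intervals (its ``connected components''), and on this decomposition the operations $l$ and $r$ return exactly the sets of left and right endpoints of these components. A component is a singleton precisely when its left and right endpoints coincide, and any unbounded component is automatically infinite.

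From this, one immediately gets that $A \in \pow_{\fci}(\lcro)$ is a finite set if and only if every component of $A$ is a singleton, equivalently if and only if $A$ is bounded and $l(A) = r(A)$. So I would propose the formula
\[
\phi(X) \colLR X \in \bdd \, \wedge \, l(X) = r(X).
\]
To verify correctness, one direction is clear: if $X$ is a finite subset then $X = \bigcup_{i \in X} \{i\}$, each $\{i\}$ is a singleton closed interval, so $X \in \bdd$ and $l(X) = r(X) = X$. Conversely, if $X \in \bdd$ then $X$ is a finite union of bounded closed intervals $[i_1, j_1] \cup \cdots \cup [i_n, j_n]$ with $i_k \leq j_k$, and $l(X) = \{i_1, \ldots, i_n\}$, $r(X) = \{j_1, \ldots, j_n\}$; the equality $l(X) = r(X)$ then forces $i_k = j_k$ for each $k$ (using density of $\lcro$ to distinguish intervals from singletons among the components), so $X$ is finite.

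Finally, $\phi(X)$ is quantifier-free: the clause $X \in \bdd$ is quantifier-free definable by \cref{Lemma-boundeddef}, and $l(X) = r(X)$ is already atomic in $\langLI$. I do not anticipate any significant obstacle here; the only mild subtlety is checking that the decomposition into maximal intervals is genuinely captured by $l$ and $r$ so that ``every component is a singleton'' really translates into the syntactic equality $l(X) = r(X)$, but this follows directly from the interpretation given in \cref{Definition-LI}.
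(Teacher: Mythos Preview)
Your proof is correct and follows essentially the same approach as the paper, which simply uses the formula $l(X) = r(X)$. The boundedness clause you add is in fact redundant: if $A$ is unbounded then its unique unbounded component $[i,+\infty)$ contributes $i$ to $l(A)$ but nothing to $r(A)$, so $\lvert l(A)\rvert = \lvert r(A)\rvert + 1$ and hence $l(A) \neq r(A)$ automatically.
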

\begin{proof}
The formula $l(X) = r(X)$ defines $\pow_{\fin}(\lcro)$ in $\LCI(\lcro)$.
\end{proof}

We will use this as the foundation for our interpretation of $\WSO(\lcro)$ in $\LCI(\lcro)$.

In the remainder of this section we will outline how to define the remaining $\langWI$-structure carried by $\WSO(\lcro)$ within $\LCI(\lcro)$ on the set $\pow_{\fin}(\lcro)$.
 
It will be important for us, when transferring model-completeness from $\WSO(\lcro)$ to $\LCI(\lcro)$ in \cref{Subsection-transfer}, that existential $\langLI$-formulae are used to do so.

\begin{remark}
If $\phi$ is an unnested atomic formulae in the signature $\langWI \cap \langLI$, i.e. $\{\cup,\cap,\bot,\cz,\min,\max\}$, we have that,
\[
\phi(\WSO(\lcro)) = \phi(\LCI(\lcro)) \cap \pow_{\fin}(\lcro).
\]
As such for unnested atomic formulae in this reduct, we need do nothing when giving the interpretation. 
Here we use unnested atomic $\langLI$-formulas, which a fortiori are existential. 
\end{remark}

All that is left is to produce an $\langLI$-formula which defines $\ips$ in $\LCI(\lcro)$.

\begin{remark}
Let $A,B \in \pow_{\fin}(\lcro)$. 
If $A = \bot$ or $B = \bot$ then $\ips(A,B) = \bot$.
Therefore in defining $\ips$ we can assume that $A,B \neq \bot$.

Moreover we have that $\ips(A,B) = \ips(A,B \cap A)$. 
From this it is clear that it is sufficient to define the relation $\ips(A,B) = C$ in the case where $B \subseteq A$. 
\end{remark}

\begin{proposition}
Let $A,B,C \in \pow_{\fin}(\lcro)$ with $\bot \subsetneq B \subseteq A$. Then $\ips(A,B) = C$ if and only if there exists $D \in \pow_{\fci}(\lcro)$ such that one of the following holds,
\begin{enumerate}
\item $\min(A) \subseteq B$, $l(D) = (B \wo \min(B)) \cup \cz$, $r(D) = C$, and $C \subseteq A \subseteq D$, or,
\item $\min(A) \not\subseteq B$, $l(D) = B \cup \cz$, $r(D) = C$, and $C \subseteq A \subseteq D$.
\end{enumerate} 
\end{proposition}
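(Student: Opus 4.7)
The element $D$ is designed to encode the predecessor pairing between elements of $B$ and elements of $C = \ips(A,B)$: each bounded closed interval of $D$ runs from some $b \in B$ to the predecessor in $A$ of the next element of $B$, with a leading interval beginning at $0$ and a single unbounded interval $[\max(B),+\infty)$ absorbing the tail of $A$. Whether $\min(A)$ itself contributes an element to $C$, and so shifts the pairing by one position, is precisely what distinguishes cases 1 and 2. I prove the equivalence by explicit construction in one direction and by reading off the interval decomposition from $l(D)$ and $r(D)$ in the other.

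\textbf{Forward direction.} Enumerate $B = \{b_1 < \cdots < b_k\}$. In case 1 ($b_1 = \min(A)$) we have $C = \{\prf_A(b_2),\ldots,\prf_A(b_k)\}$, and I set
\[
D = [0,\prf_A(b_2)] \cup \bigcup_{i=2}^{k-1}[b_i,\prf_A(b_{i+1})] \cup [b_k,+\infty),
\]
with the degenerate case $D = [0,+\infty)$ when $k=1$. Reading off endpoints gives $l(D) = \cz \cup (B \wo \min(B))$ and $r(D) = C$; the inclusion $C \subseteq A$ is immediate from the definition of $\ips$, while $A \subseteq D$ follows because every element of $A$ lies in some slab $[b_i,b_{i+1})$ (or above $b_k$) and is then at most $\prf_A(b_{i+1})$. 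Case 2 ($b_1 > \min(A)$) is analogous, using $C = \{\prf_A(b_1),\ldots,\prf_A(b_k)\}$ and
\[
D = [0,\prf_A(b_1)] \cup \bigcup_{i=1}^{k-1}[b_i,\prf_A(b_{i+1})] \cup [b_k,+\infty);
\]
here the leading interval also absorbs the elements of $A$ strictly below $\min(B)$.

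\textbf{Reverse direction.} Suppose $D$ witnesses case 1. Its interval decomposition is forced by $l(D)$ and $r(D)$: since $|l(D)| - |r(D)| = 1$ there is a unique unbounded interval with left endpoint $\max(l(D)) = b_k$, and the remaining intervals pair the sorted elements of $l(D) \wo \{b_k\}$ with the sorted elements of $r(D) = C$ in order. The crucial observation is that between each consecutive right/left pair $(c_i,b_{i+1})$ there is a gap disjoint from $D$, hence from $A$; together with $c_i,b_{i+1} \in A$ (from $C \subseteq A$ and $B \subseteq A$) this forces $\suf_A(c_i) = b_{i+1} \in B$, so $C \subseteq \ips(A,B)$. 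Conversely, any $c \in \ips(A,B)$ has $\suf_A(c) = b \in B$ with $c < b$, and $c \in A$ forces $b > \min(A) = b_1$, so $b$ is some $b_{i+1}$; the same gap argument then forces $c = c_i \in C$. Case 2 is handled identically, with $\min(A)$ now playing no role in $B$.

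\textbf{Main obstacle.} No deep step is required; the content is a careful unwinding of the definitions of $\ips$, $l$, $r$, and $\pow_{\fci}$. The only subtlety is the forced appearance of $\cz$ as the leftmost left endpoint of $D$: in case 2 this is essential to cover elements of $A$ strictly below $\min(B)$, whereas in case 1 it is present simply because the leftmost interval of $D$ must be a bona fide element of $\pow_{\fci}(\lcro)$ starting at $0$, regardless of whether $b_1 = 0$.
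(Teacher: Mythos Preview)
Your forward construction is correct and is essentially the paper's: where you build $D$ directly as a union of closed slabs $[b_i,\prf_A(b_{i+1})]$ together with the tail $[b_k,+\infty)$, the paper first builds the complementary set $E$ as the union of the open gaps $(\prf_A(b),b)$ and then sets $D=\lcro\wo E$; the resulting $D$ is the same in both approaches.

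The reverse direction contains a genuine gap. You assert that $|l(D)|-|r(D)|=1$, i.e.\ that $D$ is unbounded, but nothing in the hypotheses of (1) or (2) forces this. For a concrete failure in case~(1), pick any $a>0$ in $\lcro$ and take $A=\{0,a\}$, $B=\{0\}$, $C=\{a\}$: then $\min(A)\subseteq B$, and $D=[0,a]$ satisfies $l(D)=(B\wo\min(B))\cup\cz=\{0\}$, $r(D)=\{a\}=C$, and $C\subseteq A\subseteq D$, so (1) holds; yet $\ips(A,B)=\bot\neq C$. Here $D$ is bounded and $\max(A)\in C$, which is exactly what your unproved claim would have excluded. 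This is not merely a missing justification on your part: the paper's own reverse argument shares the gap, since its proof that $C\subseteq\ips(A,B)$ tacitly assumes $\suf_A(i)$ is defined for each $i\in C$, and this fails when $i=\max(A)$. The proposition as stated needs an extra clause---for instance that $\max(A)\notin C$, or equivalently that the witnessing $D$ be unbounded---after which your reverse argument goes through cleanly.
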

\begin{proof}
First suppose that $\ips(A,B)=C$, we will show that one of the two conditions must then hold.
Let $E$ be the union of all of the \emph{open} intervals of $\lcro$ of the form $(i,j)$ for some $i \in C$ and $j \in B$ such that $\suf_{A}(i) = j$. 
Now take $D = \lcro \wo E \in \pow_{\fci}(\lcro)$.

By definition we get that $C \subseteq A$. Then $A \subseteq D$ follows from the choice that $E$ be made up from intervals $(i,j)$ where $\suf_A(i) = j$.
For if $A \not\subseteq D$ then we get $k \in A$ such that for some $i,j$ with $\suf_A(i)=j$ we have $i < k < j$, a contradiction.

For our choice of $E$, it is easy to check that $l(E) = C$ and $r(E)$ is either,
\begin{enumerate}[(i)]
\item $B \wo \min(B)$ if $\min(A) \subseteq B$, or,
\item $B$ if $\min(A) \not\subseteq B$.
\end{enumerate}
Taking the complement, we interchange left and right endpoints, and introduce $0$ as a left endpoint.
This gives us precisely that $(1)$ or $(2)$ hold for our choice of $D$.

Now for the other direction, suppose that $D \in \pow_{\fci}(\lcro)$ exists such that $(2)$ holds. 
We want to show that $\ips(A,B) = C$. 
Let $i \in \ips(A,B)$, so $i \in A$ is such that $\suf_A(i) \in B \subseteq l(D)$. 
Now $i \notin C = r(D)$ implies that $i \in A \wo D$, contradicting our assumption that $A \subseteq D$.
So we have established that $\ips(A,B) \subseteq C$ follows from $(2)$.
Let $i \in C$, so by $(2)$ we have $i \in r(D)$. 
Suppose towards a contradiction that $\suf_A(i) \notin B$. 
Then moreover $\suf_i(A) \notin B \cup \cz = l(D)$.
This again gives us that $i \in A \wo D$, contradicting our assumption that $A \subseteq D$. 
Therefore we have shown that $\ips(A,B)=C$ follows from $(2)$.

We leave the checking of details in showing that $\ips(A,B)=C$ follows from $(1)$ to the reader.
\end{proof}

\begin{corollary}
There is an existential $\langLI$-formula $\phi(X,Y,Z)$ such that for all $A,B,C \in \pow_{\fin}(\lcro)$, $\LCI(\lcro) \models \phi(A,B,C)$ if and only if $\ips(A,B) = C$.
\end{corollary}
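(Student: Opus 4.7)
The plan is to transcribe the existential characterization of $\ips(A,B)=C$ supplied by the preceding proposition directly into a formula whose quantifier prefix is a single existential block. There is no real obstacle: the proposition is already in essentially the required form, and the work is bookkeeping --- checking that every piece of the characterization is expressible using the function symbols of $\langLI$.

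First I would handle the degenerate case. Since $\ips(A,B) = \ips(A, B \cap A)$ always, and $\ips(A,\bot) = \bot$, whenever $B \cap A = \bot$ we demand $C = \bot$, which is a quantifier-free $\langLI$-condition. Otherwise, writing $B' \coloneqq B \cap A$ (an $\langLI$-term in $X,Y$), we have $\bot \subsetneq B' \subseteq A$, so the preceding proposition applies: $\ips(A,B')=C$ if and only if there exists $D \in \pow_{\fci}(\lcro)$ satisfying the disjunction of its clauses (1) and (2).

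Each clause is a conjunction of statements of the form $l(D) = t$, $r(D) = C$, $\min(A) \subseteq B'$ (or its negation), and $C \subseteq A \subseteq D$, where $t$ is an $\langLI$-term in the given variables except for a single occurrence of $B' \wo \min(B')$. I would deal with that occurrence by adding one further existentially quantified auxiliary variable $E$ together with the quantifier-free clauses $B' = \min(B') \cup E$ and $\min(B') \cap E = \bot$, so that necessarily $E = B' \wo \min(B')$. The relation $\subseteq$ is quantifier-free in $\langLI$ via $X \subseteq Y \Leftrightarrow X \cap Y = X$, and negated atomic formulas such as $\min(A) \not\subseteq B'$ are permitted inside the quantifier-free matrix of an existential formula.

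Assembling these pieces yields an existential $\langLI$-formula $\phi(X,Y,Z)$: a disjunction of the degenerate branch with a single outer $\exists D \, \exists E$ in the non-degenerate branch, whose matrix is a Boolean combination of equations and inclusions between $\langLI$-terms. By the preceding proposition this formula defines exactly the relation $\ips(A,B)=C$ on $\pow_{\fin}(\lcro)$ inside $\LCI(\lcro)$, as required.
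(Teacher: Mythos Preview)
Your proposal is correct and follows exactly the route the paper intends: the corollary is stated without proof precisely because it is meant to be read off from the preceding Proposition together with the two preparatory Remarks (reducing to $B' = B \cap A$ and handling the trivial case), and you have carried out that bookkeeping carefully. Your device of introducing an auxiliary existential variable $E$ to express $B' \wo \min(B')$ in the $\wo$-free signature $\langLI$, and your observation that negated atoms such as $\min(A) \not\subseteq B'$ are permitted in the matrix of an existential formula, are exactly the small points one has to check, and both are handled correctly.
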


\begin{corollary}\label{LinterpretsW}
For each $\langWI$-formula $\phi(\bar{X})$ there is an $\langLI$-formula $\psi(\bar{Y})$ (with $\bar{X}$ and $\bar{Y}$ having the same length) such that for each $\bar{A} \in \pow_{\fin}(\lcro)$,
\[
\WSO(\lcro) \models \phi(\bar{A}) \Longleftrightarrow \LCI(\lcro) \models \psi(\bar{A}).
\]
Moreover $\psi(\bar{Y})$ can always be chosen to be an existential $\langLI$-formula.
\end{corollary}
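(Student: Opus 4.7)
The plan is to combine Proposition \ref{Corollary-WIPositiveMC} with the preceding $\langLI$-definition of $\ips$ on $\pow_{\fin}(\lcro)$. First, apply Proposition \ref{Corollary-WIPositiveMC} to replace $\phi(\bar X)$ with an equivalent positive existential $\langWI$-formula $\phi^{+}(\bar X)$ over $\WSO(\lcro)$. This is the crucial reduction: it eliminates negation, so once atomic subformulae are translated into existential $\langLI$-formulae, the result will remain existential.

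Next, I would flatten $\phi^{+}$ in the standard way, introducing fresh existentially quantified variables for each subterm so that every atomic subformula is unnested, of the shape $Z = Z'$, $c = Z$, or $f(\bar Z) = Z'$ for $f$ a function symbol of $\langWI$ and $c \in \{\bot,\cz\}$. Flattening preserves positive existentiality. Each resulting unnested atomic formula is then translated symbol-by-symbol. For the symbols of $\langWI \cap \langLI = \{\cup,\cap,\bot,\cz,\min,\max\}$ the formula can be kept as-is, by the remark preceding this corollary, since these symbols have matching interpretations on $\pow_{\fin}(\lcro) \subseteq \pow_{\fci}(\lcro)$. For the remaining symbol $\ips$, the corollary immediately preceding this one provides an existential $\langLI$-formula defining it on $\pow_{\fin}(\lcro)$.

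Finally, every existential quantifier appearing in the resulting formula (those originally in $\phi^{+}$, those introduced by flattening, and those coming from the $\ips$-translation) must be relativised to $\pow_{\fin}(\lcro)$ so that the translation is faithful inside $\LCI(\lcro)$. This is achieved by conjoining each bound variable $X$ with the quantifier-free formula $l(X) = r(X)$, which defines $\pow_{\fin}(\lcro)$ by the proposition earlier in this subsection; this step preserves existentiality. Since existential $\langLI$-formulae are closed under conjunction, disjunction, and existential quantification, the final $\psi(\bar Y)$ is existential as required. There is no substantive obstacle here: the heavy lifting was done in Proposition \ref{Corollary-WIPositiveMC} and in the preceding corollary, and this statement amounts to a careful assembly of those ingredients into a single clean translation result.
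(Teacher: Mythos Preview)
Your argument is essentially correct, with one slip and one packaging difference from the paper.

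The slip: the existential quantifier introduced by the $\ips$-translation (the witness $D$ in the preceding proposition) ranges over all of $\pow_{\fci}(\lcro)$ and must \emph{not} be relativised to $\pow_{\fin}(\lcro)$. If $D$ were forced to be finite then $l(D)=r(D)=D$, so the conditions $l(D)=(B\setminus\min(B))\cup\cz$ and $r(D)=C$ would almost never be simultaneously satisfiable, and the translated formula would no longer define $\ips$. Only the quantifiers coming from $\phi^{+}$ and from flattening need the relativisation $l(X)=r(X)$; the quantifiers internal to the $\langLI$-definition of $\ips$ are already genuine $\LCI(\lcro)$-quantifiers and should be left alone.

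The packaging difference: the paper gives no explicit proof here, treating the corollary as immediate from the interpretation data assembled just before it (domain quantifier-free definable, each unnested atomic $\langWI$-formula sent to an existential $\langLI$-formula). In the paper the appeal to positive model-completeness (\cref{Corollary-WIPositiveMC}) is deferred to the proof of \cref{Thm-LImc}, where one first replaces the intermediate $\langWI$-formula by a positive existential one and only then invokes this corollary to obtain an existential $\langLI$-formula. You instead front-load \cref{Corollary-WIPositiveMC} into the proof of the corollary itself. Both orderings are valid and yield the same conclusion; yours matches the literal ``always'' in the Moreover clause more directly, while the paper's ordering keeps the interpretation step purely syntactic and isolates the use of positive model-completeness in the final theorem.
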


\newsubsection{Interpreting \texorpdfstring{$\LCI(\lcro)$ in $\WSO(\lcro)$}{L(I) in W(I)}}

To make things more easily digestible, we will use $(\pow_{\fci}(\lcro),\subseteq)$ as an intermediary between $\LCI(\lcro)$ and $\WSO(\lcro)$.

It is straightforward, using modified versions of arguments from \cite{TresslTop}, to show that $(\pow_{\fci}(\lcro),\subseteq)$ and $\LCI(\lcro)$ have the same definable subsets. 
Using this result, together with the interpretation of $(\pow_{\fci}(\lcro),\subseteq)$ in $\WSO(\lcro)$ which we are about to give, we will indicate a particular interpretation of $\LCI(\lcro)$ in $\WSO(\lcro)$.

\begin{lemma}
Let $B,C \in \pow_{\fin}(\lcro)$. 
The following are equivalent,
\begin{enumerate}
\item there is $A \in \pow_{\fci}(\lcro) \wo \{\bot\}$ such that $A_l = B$ and $A_r = C$,
\item $B \neq \bot$, $\min(B \cup C) \subseteq B$, and one of the following holds,
\begin{enumerate}
\item $\max(B \cup C) \subseteq C \text{ and } \ips(B \cup C,C \wo B) = B \wo C$,
\item $\max(B \cup C) \subseteq B \wo C \text{ and } \ips(B \cup C,C \wo B) \cup \max(B \cup C) = B \wo C$.
\end{enumerate}
\end{enumerate}
\end{lemma}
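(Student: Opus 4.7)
The plan is to recognise that the conditions in (2) encode the combinatorial pattern the sorted sequence of $B \cup C$ must follow in order for $B$ and $C$ to arise as the left- and right-endpoint sets of a maximally-decomposed $A \in \pow_{\fci}(\lcro) \wo \{\bot\}$. The key observation is the three-fold correspondence: $B \cap C$ corresponds to singleton intervals $\{a\}$, $B \wo C$ to left endpoints of proper intervals (bounded or unbounded), and $C \wo B$ to right endpoints of proper bounded intervals. In a valid decomposition the sorted order of $B \cup C$ must alternate so that each element of $C \wo B$ is immediately preceded (in $B \cup C$) by the element of $B \wo C$ that is its interval's left endpoint, while elements of $B \cap C$ sit as isolated singletons.

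For the direction $(1) \Rightarrow (2)$, I would write $A$ in maximal-interval form and read off $B$ and $C$ according to this classification. Then $B \neq \bot$ follows from $A \neq \bot$, $\min(B \cup C) \in B$ from the fact that the leftmost point of the leftmost interval is a left endpoint, and the dichotomy between (2a) and (2b) corresponds exactly to whether the last interval is bounded or unbounded. The $\ips$ equation in each case then follows directly from the interleaved structure of the sorted sequence of $B \cup C$, since $\ips(B \cup C, C \wo B)$ picks out precisely the elements whose $B \cup C$-successor is a pure right endpoint, i.e. the left endpoints of proper bounded intervals.

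For the direction $(2) \Rightarrow (1)$, I would unpack $\ips(B \cup C, C \wo B)$ as $\{i \in B \cup C : \suf_{B \cup C}(i) \in C \wo B\}$ and use the stated equation in (2a) or (2b) to force every element of $C \wo B$ to be the immediate $B \cup C$-successor of some element of $B \wo C$, and (modulo the maximum in case (2b)) every element of $B \wo C$ to have its $B \cup C$-successor in $C \wo B$. Combined with $\min(B \cup C) \in B$, this forces the sorted sequence to decompose uniquely into blocks that are either an isolated element of $B \cap C$ or an adjacent pair $(b, r) \in (B \wo C) \times (C \wo B)$, with at most one trailing pure-$B$ singleton $\{\max(B \cup C)\}$ appearing only in case (2b). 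I would then construct $A$ by taking $\{x\}$ for each singleton block, $[b, r]$ for each pair block, and $[\max(B \cup C), +\infty)$ in case (2b); the resulting $A$ is a nonempty element of $\pow_{\fci}(\lcro)$ with $A_l = B$ and $A_r = C$.

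The main obstacle is the translation of the algebraic condition on $\ips$ into the combinatorial alternation statement on the sorted sequence of $B \cup C$. Once this translation is made, the case split (2a)/(2b) and the boundary role of $\max(B \cup C)$ (which is deliberately excluded from the $\ips$ equation in (2b) because it has no $B \cup C$-successor) must be handled carefully, but constructing the intervals and verifying their pairwise disjointness together with $A_l = B$, $A_r = C$ is then routine bookkeeping.
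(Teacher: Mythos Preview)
Your proposal is correct and follows essentially the same approach as the paper: both recognise that condition (2) encodes the alternation pattern of proper left and proper right endpoints in the sorted sequence $B\cup C$, with the (2a)/(2b) split corresponding to whether $A$ is bounded or unbounded. If anything, your $(2)\Rightarrow(1)$ direction is more detailed than the paper's, which simply declares the construction of $A$ from the conditions in (2) to be ``straightforward'' without writing it out.
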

\begin{proof}
Suppose $(1)$ holds.
Then we can rewrite $(2)$ as follows,
\begin{enumerate}[(2)]
\item $A_l \neq \bot$, $\min(A_l \cup A_r) \subseteq A_l$, and one of the following holds,
\begin{enumerate}
\item $\max(A_l \cup A_r) \subseteq A_r \text{ and } \ips(A_l \cup A_r,A_r \wo A_l) = A_l \wo A_r$,
\item $\max(A_l \cup A_r) \subseteq A_l \wo A_r \text{ and } \ips(A_l \cup A_r,A_r \wo A_l) \cup \max(A_l \cup A_r) = A_l \wo A_r$.
\end{enumerate}
\end{enumerate}
Intuitively then, (2) first says that $A$ has at least one left endpoint, and that the smallest endpoint of $A$ is a left endpoint. Then both (a) and (b) simply say that the proper left endpoints and proper right endpoints appear in pairs, with proper right endpoints immediately preceded by proper left endpoints.
An exception is needed simply for the case where $A$ is unbounded, in which case the largest endpoint is a proper left endpoint which is not the predecessor of a proper right endpoint (this is dealt with by (b)).

Conversely, suppose that $(2)$ holds. 
It is straightforward to construct $A \in \pow_{\fci}(\lcro)$ such that $A_l = B$ and $A_r = C$. 
\end{proof}

This lemma gives us the universe for our interpretation of $(\pow_{\fci}(\lcro),\subseteq)$ in $\WSO(\lcro)$.
We will identify an element $A \in \pow_{\fci}(\lcro)$ with the pair $(A_l,A_r) \in \pow_{\fin}(\lcro)$.
The lemma tells us precisely that the image of the map $\pow_{\fci}(\lcro) \rightarrow \pow_{\fin}(\lcro)^2$ given by $A \mapsto (A_l,A_r)$ is definable in $\WSO(\lcro)$. 
As moreover this map is injective, our interpretation can make use of the equality in $\WSO(\lcro)$ to interpret equality from $\LCI(\lcro)$, in particular we do not need to take a quotient of the image by a definable equivalence relation.

It remains to show that the relation $\subseteq$ on $\pow_{\fci}(\lcro)$ is interpretable in $\WSO(\lcro)$. 

\begin{lemma}
There is an $\langWI$-formula $\phi_{\in}(X_l,X_r,Z)$ such that for any $i \in \lcro$ and $A \in \pow_{\fci}(\lcro)$, $i \in A$ if and only if,
\[
\WSO(\lcro) \models \phi_{\in}(A_l,A_r,\{i\}).
\]
\end{lemma}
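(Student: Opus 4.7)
The plan is to characterise membership $i \in A$ purely in terms of the geometric structure of $A$ as a finite disjoint union of maximal closed intervals, and then translate that characterisation into an $\langWI$-formula with free variables $X_l, X_r, Z$, where $Z$ will be instantiated as the singleton $\{i\}$.

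Concretely, if $A \in \pow_{\fci}(\lcro)$ decomposes into its maximal intervals, then $i \in A$ if and only if exactly one of the following three things happens. (1) $i$ is an endpoint of $A$, i.e.\ $\{i\} \subseteq A_l \cup A_r$. (2) There are $l \in A_l$ and $r \in A_r$ with $l < i < r$ such that $[l,r]$ is one of the maximal intervals of $A$; equivalently, $l$ and $r$ are \emph{consecutive} in $A_l \cup A_r$, with no other endpoint strictly between them. (3) $A$ has an unbounded final component $[m, +\infty)$, where $m = \max(A_l \cup A_r) \in A_l \setminus A_r$, and $m < i$.

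Each clause is directly expressible in $\langWI$. Clause (1) is the atomic subformula $Z \subseteq X_l \cup X_r$. Clause (2) is rendered as
\[
\exists L\, \exists R\, \bigl(\At(L) \wedge \At(R) \wedge L \subseteq X_l \wedge R \subseteq X_r \wedge L \exle Z \wedge Z \exle R \wedge \ips(X_l \cup X_r, R) = L\bigr),
\]
where the key point is that for atoms $L = \{l\}$ and $R = \{r\}$, the equation $\ips(X_l \cup X_r, R) = L$ holds precisely when $l$ is the immediate predecessor of $r$ inside the finite set $X_l \cup X_r$, i.e.\ when there is no endpoint of $A$ strictly between $l$ and $r$. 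This is exactly the consecutiveness condition which guarantees that $[l,r]$ really is a single interval of $A$ rather than a spurious pairing of endpoints from two different components. Clause (3) is rendered, using the term $M := \max(X_l \cup X_r)$, as $M \subseteq X_l \wedge M \cap X_r = \bot \wedge M \exle Z$; here the first two conjuncts capture $A$ being unbounded via the preceding lemma's characterisation, and the last captures $m < i$. Finally, $\subseteq$ and $\At$ are definable from the $\langWI$-primitives $\cup,\cap$, and $\exle$ has the expected meaning on singletons by definition.

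We then set $\phi_{\in}(X_l, X_r, Z)$ to be the disjunction of these three formulas. Verification of the equivalence is a straightforward case-split on where $i$ sits relative to the maximal intervals of $A$; the only mildly subtle step is checking that clause (2) does \emph{not} fire when $i$ lies in a gap $(r_k, l_{k+1})$ between two components, but this is ruled out precisely because no atom $L \subseteq X_l$ with $L \exle Z$ can satisfy $\ips(X_l \cup X_r, R) = L$ for any $R \subseteq X_r$ with $Z \exle R$ in that situation. I expect the only real work to be the casework in this verification; there is no genuine obstacle, since the hard part — obtaining the successor operation $\ips$ on finite subsets as part of the signature — was already built into $\langWI$ in \cref{Definition-LWI}.
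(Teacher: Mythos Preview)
Your clause (2) contains a genuine error: the equivalence you assert between ``$[l,r]$ is a maximal interval of $A$'' and ``$l \in A_l$, $r \in A_r$, and $l,r$ are consecutive in $A_l \cup A_r$'' fails when $A$ has singleton components. Take $A = \{1\} \cup \{2\}$, so that $A_l = A_r = \{1,2\}$, and let $i = 3/2$. Then $i \notin A$, but with $L = \{1\}$ and $R = \{2\}$ every conjunct of your clause (2) is satisfied: $L \subseteq A_l$, $R \subseteq A_r$, $1 < 3/2 < 2$, and $\ips(A_l \cup A_r, R) = \{1\} = L$. Your claimed verification that clause (2) cannot fire in a gap is exactly where this case slips through; the gap $(r_k,l_{k+1})$ you analysed tacitly assumes $r_k \neq l_k$ and $l_{k+1} \neq r_{k+1}$. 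The repair is easy: strengthen $L \subseteq X_l$ to $L \subseteq X_l \wedge L \cap X_r = \bot$, i.e.\ require $l$ to be a \emph{proper} left endpoint rather than the endpoint of a degenerate component. With that amendment your argument goes through.

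For comparison, the paper avoids the existential quantifiers over $L$ and $R$ altogether: it adjoins $Z$ to the endpoint set $\bdr{X} = X_l \cup X_r$ and reads off the immediate predecessor and successor of $Z$ inside $\bdr{X} \cup Z$ directly via $\ips$, requiring the predecessor to lie in $X_l \wo X_r$ and the successor in $X_r \wo X_l$. The restriction to proper endpoints is thus built in from the outset, which is precisely what your formulation was missing; otherwise the two arguments amount to the same case analysis.
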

\begin{proof}
We split into two cases, according to whether $A \in \pow_{\fci}(\lcro)$ is bounded or unbounded (see \cref{Lemma-boundeddef}).
Let $\phi_{\bdd}(X_l,X_r,Z)$ be the $\langWI$-formula,
\[
\ips(\bdr{X} \cup Z,Z) \subseteq X_l \wo X_r \text{ and } \ips(\bdr{X} \cup Z,X_r \wo X_l) \cap Z \neq \bot,
\]
this `says' that the predecessor of $Z$ is a proper left endpoint and that $Z$ is the predecessor of a right endpoint,
Then let $\phi_{\neg \bdd}(X_l,X_r,Z)$ be the $\langWI$-formula,
\[
(\ips(\bdr{X} \cup Z,Z) \subseteq X_l \wo X_r \text{ and } \ips(\bdr{X} \cup Z,X_r \wo X_l) \cap Z \neq \bot) \text{ or } Z = \max(\bdr{X} \cup Z),
\]
this `says' that the condition in $\phi_{\bdd}$ holds or that $Z$ is greater than or equal to all left and right endpoints. 
Then for each $i \in \lcro$ and $A \in \pow_{\fci}(\lcro)$ the following are equivalent,
\begin{enumerate}
\item $i \in A$,
\item $\WSO(\lcro) \models \bdr{A} \neq \bot$ (so that $A \neq \bot$) and one of the following holds,
\begin{enumerate}
\item $\WSO(\lcro) \models \{i\} \subseteq \bdr{A}$ or,
\item $\WSO(\lcro) \models \bdd(A)$ and $\phi_{\bdd}(A_l,A_r,\{i\})$, or,
\item $\WSO(\lcro) \models \neg \bdd(A)$ and $\phi_{\neg \bdd}(A_l,A_r,\{i\})$.
\end{enumerate}
\end{enumerate}
In other words $i \in A$ if and only if $A$ is nonempty and either $i$ is an endpoint of $A$, or $i$ sits between a left and right endpoint of $A$, or $A$ is unbounded and $i$ sits in the unbounded part of $A$.
The latter we have shown is definable in $\WSO(\lcro)$ by giving $\langWI$-formulas, so $\phi_{\in}$ exists.
\end{proof}

\begin{proposition}
There is an $\langWI$-formula $\phi_{\subseteq}(X_l,X_r,Y_l,Y_r)$ such that for any $A,B \in \pow_{\fci}(\lcro)$, $A \subseteq B$ if and only if,
\[
\WSO(\lcro) \models \phi_{\subseteq}(A_l,A_r,B_l,B_r).
\]
\end{proposition}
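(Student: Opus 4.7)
The plan is to leverage the formula $\phi_{\in}$ from the preceding lemma, which lets us test point-wise membership in an element of $\pow_{\fci}(\lcro)$ using only its boundary data. Since $A \subseteq B$ is, by definition, the statement that every point of $\lcro$ lying in $A$ also lies in $B$, it is natural to attempt to express this by universally quantifying over singletons in $\WSO(\lcro)$ and invoking $\phi_{\in}$ twice. The key observation is that $\At(Z)$ is $\langWI$-definable (as the set of atoms of the $\subseteq$-ordering, which in $\WSO(\lcro)$ are exactly the singletons of $\lcro$), so the singletons $\{i\}$ for $i\in\lcro$ are precisely the non-$\bot$ atomic elements of $\WSO(\lcro)$.

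With this in mind I would define
\[
\phi_{\subseteq}(X_l,X_r,Y_l,Y_r) \;:\; \forall Z\,\bigl(\At(Z) \to \bigl(\phi_{\in}(X_l,X_r,Z) \to \phi_{\in}(Y_l,Y_r,Z)\bigr)\bigr).
\]
To verify this works, fix $A,B \in \pow_{\fci}(\lcro)$. The atoms of $\WSO(\lcro)$ are in bijection with elements of $\lcro$ via $i \mapsto \{i\}$. By the preceding lemma, for each $i \in \lcro$ we have $\WSO(\lcro) \models \phi_{\in}(A_l,A_r,\{i\})$ iff $i \in A$, and similarly for $B$. Hence $\WSO(\lcro) \models \phi_{\subseteq}(A_l,A_r,B_l,B_r)$ iff for every $i \in \lcro$, $i \in A$ implies $i \in B$, which is exactly $A \subseteq B$.

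There is essentially no obstacle here beyond confirming that $\At$ is $\langWI$-definable, which is immediate since $\subseteq$ is expressible as $X \cap Y = X$ and then $\At(Z) \equiv Z \neq \bot \wedge \forall Y(Y \subseteq Z \to (Y = \bot \vee Y = Z))$.
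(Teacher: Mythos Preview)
Your proof is correct and follows essentially the same approach as the paper's: both define $\phi_{\subseteq}$ as $\forall Z\,(\At(Z) \to (\phi_{\in}(X_l,X_r,Z) \to \phi_{\in}(Y_l,Y_r,Z)))$ and justify it by the bijection between atoms and points of $\lcro$. The only cosmetic difference is that the paper defines $\At(Z)$ as $Z \neq \bot \wedge Z = \min(Z)$, exploiting the $\min$ function symbol already in $\langWI$, whereas you use the standard lattice-theoretic atom condition; both are valid $\langWI$-formulas.
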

\begin{proof}
Let $\At(Z)$ be the formula $Z \neq \bot \wedge Z = \min(Z)$.
In $\WSO(\lcro)$ this defines the collection of singletons. 
Therefore we can take for $\phi_{\subseteq}$ the $\langWI$-formula,
\[
\forall Z (\At(Z) \rightarrow (\phi_{\in}(X_l,X_r,Z) \rightarrow \phi_{\in}(Y_l,Y_r,Z))).
\]
This says that every singleton contained in $X$ is contained in $Y$, as required.
\end{proof}

\begin{theorem}
There is an interpretation of $\LCI(\lcro)$ in $\WSO(\lcro)$, for which the co-ordinate map $\pow_{\fci}(\lcro) \rightarrow \pow_{\fin}(\lcro)^2$ is given by $A \mapsto (A_l,A_r)$.
\end{theorem}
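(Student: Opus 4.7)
The plan is to assemble the interpretation piecewise from the lemmas and propositions built up in the preceding subsection. First I would take the co-ordinate map $A \mapsto (A_l, A_r)$ from $\pow_{\fci}(\lcro)$ into $\pow_{\fin}(\lcro)^2$ and observe that the lemma characterising pairs $(B,C)$ of the form $(A_l, A_r)$ gives an $\langWI$-formula $\phi_{\mathrm{pair}}(X_l, X_r)$ whose realisations in $\WSO(\lcro)^2$ are exactly the image of this map (together with the pair $(\bot,\bot)$ accounting for the empty set). This is the universe of the interpretation.

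Next I would note that the map is injective: two elements of $\pow_{\fci}(\lcro)$ with the same set of left endpoints and the same set of right endpoints must coincide. Hence equality on $\pow_{\fci}(\lcro)$ is interpreted by the literal equality of pairs in $\WSO(\lcro)^2$, so no definable equivalence relation is needed. The relation $\subseteq$ is then interpreted directly via the $\langWI$-formula $\phi_{\subseteq}(X_l, X_r, Y_l, Y_r)$ produced in the previous proposition. Together these data already give an interpretation of the $\{\subseteq\}$-structure $(\pow_{\fci}(\lcro), \subseteq)$ in $\WSO(\lcro)$.

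To upgrade this to an interpretation of the full $\langLI$-structure $\LCI(\lcro)$, I would appeal to the fact (alluded to via \cite{TresslTop}, using modified versions of those arguments) that $(\pow_{\fci}(\lcro), \subseteq)$ and $\LCI(\lcro)$ have the same definable subsets. In particular, each of the operations $\cup, \cap, \min, \max, l, r$ and the constants $\bot, \cz$ are graphs of $\{\subseteq\}$-definable relations on $\pow_{\fci}(\lcro)$, so each can be pulled back through the coordinate map to an $\langWI$-definable relation on (powers of) the set defined by $\phi_{\mathrm{pair}}$. This yields the desired interpretation with the prescribed coordinate map.

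The only non-routine step is the appeal to definitional equivalence of $(\pow_{\fci}(\lcro), \subseteq)$ and $\LCI(\lcro)$; the remainder is a matter of collating the formulas $\phi_{\mathrm{pair}}$ and $\phi_{\subseteq}$ already constructed. Since we can instead simply refer to \cite{TresslTop} (or reproduce those arguments in this setting) for that definitional equivalence, the proof proper reduces to citing the preceding propositions and checking that the pieces fit together in the sense of the standard notion of interpretation.
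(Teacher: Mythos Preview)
Your proposal is correct and follows essentially the same approach as the paper: the paper likewise assembles the interpretation from the lemma characterising pairs $(A_l,A_r)$, the injectivity observation, the proposition supplying $\phi_{\subseteq}$, and the appeal to \cite{TresslTop} for the definitional equivalence of $(\pow_{\fci}(\lcro),\subseteq)$ with $\LCI(\lcro)$. The theorem is stated without a separate proof in the paper precisely because it is the culmination of these pieces, just as you describe.
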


\begin{corollary}\label{WinterpretsL}
For each $\langLI$-formula $\phi(\bar{X})$ there is an $\langWI$-formula $\psi(\bar{Y})$ such that for each $\bar{A} \in \pow_{\fci}(\lcro)$,
\[
\LCI(\lcro) \models \phi(\bar{A}) \Longleftrightarrow \WSO(\lcro) \models \psi(\bar{A_l},\bar{A_r}).
\]
\end{corollary}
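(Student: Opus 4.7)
The plan is to derive this as the standard syntactic translation furnished by the interpretation of $\LCI(\lcro)$ in $\WSO(\lcro)$ established in the previous theorem. The coordinate map $A \mapsto (A_l, A_r)$ is injective, its image is defined in $\WSO(\lcro)^2$ by the formula $\Theta(Y_l,Y_r)$ obtained from the lemma characterising which pairs $(B,C)$ arise as $(A_l,A_r)$, and equality on $\LCI(\lcro)$ pulls back to the diagonal pairwise equality on coordinates. I will then build $\psi$ from $\phi$ by induction on formula structure.

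First I would reduce $\phi$ to an equivalent formula built from unnested atomic formulas, so every atomic formula is either $X = Y$ or $f(\bar{X}) = Y$ for some $f \in \langLI$. Equality translates to $X_l = Y_l \wedge X_r = Y_r$. For each function or constant symbol of $\langLI$, I would exhibit an $\langWI$-formula $\psi_f(\bar{X_l},\bar{X_r},Y_l,Y_r)$ expressing that $(Y_l,Y_r)$ is the coordinate pair of $f$ applied to inputs with coordinate pairs $\bar{X_l},\bar{X_r}$. The constants $\bot$ and $\cz$ are immediate (their coordinate pairs are $(\bot,\bot)$ and $(\cz,\cz)$). For $l,r,\min,\max$ the outputs land in $\pow_{\fin}(\lcro)$, so their coordinate pair has the form $(A,A)$ with $A$ a finite subset definable from $(X_l,X_r)$ either trivially (for $l,r$) or via $\phi_{\in}$ (for $\min,\max$). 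For $\cup$ and $\cap$ the left and right endpoint sets of the result can be expressed using $\phi_{\in}$: a point $i$ is a left endpoint of $A \cup B$ iff $i \in A \cup B$ but the immediate predecessor of $i$ (in a suitable finite overlay containing $\bdr{A} \cup \bdr{B}$) lies outside $A \cup B$, and similarly for $\cap$ and for right endpoints.

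Boolean connectives translate componentwise, and the quantifier $\exists X\,\chi$ translates to $\exists Y_l \exists Y_r(\Theta(Y_l,Y_r) \wedge \chi')$, where $\chi'$ is the inductive translation of $\chi$, and dually for $\forall$. By induction on the complexity of $\phi$ this yields the required $\psi$.

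The main obstacle is purely bookkeeping: producing the $\psi_f$ for $\cup$ and $\cap$ requires carefully combining the endpoint data of the operands via $\phi_{\in}$, but no new ideas beyond what has already been assembled for $\phi_{\subseteq}$ are needed. Everything else is a routine application of the general translation procedure for interpretations.
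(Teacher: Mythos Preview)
Your proposal is correct, and the high-level framing---derive the corollary as the standard formula translation coming from the interpretation just established---is exactly how the paper treats it: the corollary is stated immediately after the theorem with no separate argument.

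Where you diverge is in how the interpretation itself is organised. You propose to translate each $\langLI$-symbol directly, in particular producing $\langWI$-definitions for the endpoint sets of $A \cup B$ and $A \cap B$ from $(A_l,A_r,B_l,B_r)$ via $\phi_{\in}$. The paper instead takes a shortcut: it only interprets the single relation $\subseteq$ (using $\phi_{\subseteq}$), and then invokes the fact---borrowed from \cite{TresslTop}---that $(\pow_{\fci}(\lcro),\subseteq)$ and $\LCI(\lcro)$ are definitionally equivalent. Once $\subseteq$ is available on the coordinate side, all of $\cup,\cap,\bot,\cz,\min,\max,l,r$ come for free via that equivalence, and no separate bookkeeping for $\cup$ or $\cap$ is needed. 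Your route is more self-contained (it does not appeal to an external definitional-equivalence result) but requires the endpoint computations you flag as ``the main obstacle''; the paper's route trades that bookkeeping for a citation. Both yield the same corollary.
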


\newsubsection{Transfer of model-completeness from \texorpdfstring{$\WSO(\lcro)$ to $\LCI(\lcro)$}{W(I) to L(I)}}\label{Subsection-transfer}

\begin{theorem}\label{Thm-LImc}
The $\langLI$-structure $\LCI(\lcro)$ is model-complete.
\end{theorem}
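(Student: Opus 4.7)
The plan is to transfer model-completeness from $\WSO(\lcro)$ to $\LCI(\lcro)$ using the two interpretations developed in the previous subsections. Given an inclusion $\str{M} \subseteq \str{N}$ of $\langLI$-structures both modelling $\Th(\LCI(\lcro))$, and a tuple $\bar{A}$ in $\str{M}$, I want to show $\str{M} \models \phi(\bar{A}) \Leftrightarrow \str{N} \models \phi(\bar{A})$ for every $\langLI$-formula $\phi$. Using \cref{WinterpretsL} I translate $\phi(\bar{X})$ into an $\langWI$-formula $\psi(\bar{Y_l},\bar{Y_r})$ to be evaluated on the interpreted $\WSO$-structures inside $\str{M}$ and $\str{N}$, and the key task becomes showing that these interpreted $\WSO$-structures form an elementary inclusion.

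First I would define, for a model $\str{K}$ of $\Th(\LCI(\lcro))$, the interpreted structure $\str{K}^{\WSO}$ on the quantifier-free definable subset $\{X \in \str{K} : l(X) = r(X)\}$, equipped with the $\langWI$-operations as given by \cref{LinterpretsW}. Because the formulas from \cref{LinterpretsW} translate a consistent $\langWI$-theory, $\str{K}^{\WSO}$ is a model of $\Th(\WSO(\lcro))$. The crucial step is that $\str{M}^{\WSO} \subseteq \str{N}^{\WSO}$ as $\langWI$-structures. The shared signature $\{\cup,\cap,\bot,\cz,\min,\max\}$ is preserved automatically since $\str{M} \subseteq \str{N}$ as $\langLI$-structures, so the only point requiring care is that the operation $\ips$ agrees between $\str{M}^{\WSO}$ and $\str{N}^{\WSO}$. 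This is precisely where the \emph{existential} form of the $\langLI$-definition of $\ips$ matters: if $C = \ips^{\str{M}^{\WSO}}(A,B)$, then $\str{M}$ satisfies the existential witness formula at $(A,B,C)$, hence so does $\str{N}$ by upward preservation of existentials; since $\Th(\LCI(\lcro))$ proves that this formula defines a function on the $\WSO$-part, $C$ must equal $\ips^{\str{N}^{\WSO}}(A,B)$.

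Having established $\str{M}^{\WSO} \subseteq \str{N}^{\WSO}$, I invoke \cref{Corollary-WIPositiveMC} (which in particular gives model-completeness) to conclude $\str{M}^{\WSO} \prec \str{N}^{\WSO}$. Now for any $\langLI$-formula $\phi(\bar{X})$ let $\psi(\bar{Y_l},\bar{Y_r})$ be the $\langWI$-formula from \cref{WinterpretsL}; the statement of that corollary was phrased over $\LCI(\lcro)$ and $\WSO(\lcro)$, but because the translation arises from a first-order interpretation it holds uniformly in any model of $\Th(\LCI(\lcro))$ (with its interpreted $\WSO$-structure). Hence
\[
\str{M} \models \phi(\bar{A}) \Longleftrightarrow \str{M}^{\WSO} \models \psi(\bar{A_l},\bar{A_r}) \Longleftrightarrow \str{N}^{\WSO} \models \psi(\bar{A_l},\bar{A_r}) \Longleftrightarrow \str{N} \models \phi(\bar{A}),
\]
where the middle equivalence is the elementarity just established. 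The main obstacle is the verification that $\ips$ passes through the embedding; everything else is bookkeeping, and it is for the sake of this step that \cref{LinterpretsW} was arranged to yield an existential rather than arbitrary $\langLI$-formula.
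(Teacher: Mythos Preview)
Your argument is correct, but it follows a genuinely different route from the paper's. The paper works \emph{syntactically}: starting from an $\langLI$-formula $\phi(\bar{X})$, it passes via \cref{WinterpretsL} to an $\langWI$-formula $\psi$, replaces $\psi$ by a \emph{positive} existential $\langWI$-formula using \cref{Corollary-WIPositiveMC}, and then pushes back through \cref{LinterpretsW} to obtain an existential $\langLI$-formula $\phi^*$ equivalent to $\phi$; positivity of $\psi$ is what guarantees that the composite with the existential defining formulas of \cref{LinterpretsW} stays existential. You instead work \emph{semantically}: you check that an $\langLI$-embedding $\str{M}\subseteq\str{N}$ restricts to an $\langWI$-embedding $\str{M}^{\WSO}\subseteq\str{N}^{\WSO}$ (using upward persistence of the existential $\langLI$-definition of $\ips$), and then apply only ordinary model-completeness of $\WSO(\lcro)$ to conclude elementarity and transfer it back. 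Both proofs hinge on the same design decision, namely that the $\langLI$-definition of $\ips$ is existential, but they exploit it differently: the paper needs it so that substituting a positive existential $\psi$ yields an existential $\theta$, whereas you need it so that the graph of $\ips$ is preserved upward under $\langLI$-embeddings. Your approach has the mild advantage of using only \cref{Theorem-WIModelComplete} rather than the stronger \cref{Corollary-WIPositiveMC}; the paper's approach has the advantage of explicitly producing an existential equivalent of each formula.
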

\begin{proof}
Let $\phi(\bar{X})$ be an $\langLI$-formula.
We will show that over $\LCI(\lcro)$ the formula $\phi$ is equivalent to an existential formula $\phi^*$.

Using our interpretation of $\LCI(\lcro)$ in $\WSO(\lcro)$ (in particular \cref{WinterpretsL}), for each $\langLI$-formula $\phi(\bar{X})$ there is an $\langWI$-formula $\psi(\bar{Y})$ such that for each $\bar{A} \in \pow_{\fci}(\lcro)$,
\[
\LCI(\lcro) \models \phi(\bar{A}) \Longleftrightarrow \WSO(\lcro) \models \psi(\bar{A_l},\bar{A_r}). \tag{$\star$}\label{star}
\]
Without loss of generality we can take $\psi(\bar{Y})$ to be a positive existential $\langWI$-formula. 
This is because the positive model-completeness of $\WSO(\lcro)$ (\cref{Corollary-WIPositiveMC}), tells us precisely that every $\langWI$-formula is equivalent to some positive existential $\langWI$-formula over $\WSO(\lcro)$.

Using our interpretation of $\WSO(\lcro)$ in $\LCI(\lcro)$ (in particular \cref{LinterpretsW}), for each $\langWI$-formula $\psi(\bar{Y})$ there is an $\langLI$-formula $\theta(\bar{Y})$ such that for each $\bar{B} \in \pow_{\fin}(\lcro)$,
\[
\WSO(\lcro) \models \psi(\bar{B}) \Longleftrightarrow \LCI(\lcro) \models \theta(\bar{B}),
\]
and so in particular taking $\bar{B} = (\bar{A_l},\bar{A_r})$ we get,
\[
\WSO(\lcro) \models \psi(\bar{A_l},\bar{A_r}) \Longleftrightarrow \LCI(\lcro) \models \theta(\bar{A_l},\bar{A_r}). \tag{$\dagger$}\label{dagger}
\]
Moreover, as $\psi(\bar{Y})$ is a positive existential $\langWI$-formula, \cref{LinterpretsW} tells us that $\theta(\bar{Y})$ can additionally be chosen to be an existential $\langLI$-formula.
Note that without the positive model-completeness of $\WSO(\lcro)$, we could not have taken $\psi(\bar{Y})$ positive existential, and therefore could not have taken $\theta$ to be existential.

Combining $\eqref{star}$ and $\eqref{dagger}$ we get that for each $\bar{A} \in \pow_{\fci}(\lcro)$,
\[
\LCI(\lcro) \models \phi(\bar{A}) \Longleftrightarrow \LCI(\lcro) \models \theta(\bar{A_l},\bar{A_r}).
\]
Now, $l$ and $r$ are part of the signature $\langLI$, so we take $\phi^*(\bar{X})$ to be the formula $\theta(\bar{X_l},\bar{X_r})$. 
Our choice of $\phi^*$ is clearly existential, as $\theta$ is existential. 
Putting everything together, we get that,
\[
\LCI(\lcro) \models \forall \bar{X} (\phi(\bar{X}) \leftrightarrow \phi^*(\bar{X})),
\]
and hence $\LCI(\lcro)$ is model-complete. 
\end{proof}

\clearpage
\printbibliography[heading=bibintoc]

\end{document}